\documentclass[11pt]{article}
\usepackage{KUL_style}
\setcitestyle{authoryear,round}
\usepackage{lipsum}  
\usepackage{datetime}
\usepackage{graphicx}
\usepackage{amsmath} 
\usepackage{amssymb} 
\usepackage{amsthm}  
\newtheorem{proposition}{Proposition}
\usepackage{bm}      
\usepackage{optidef} 
\usepackage{multirow} 
\usepackage{caption}  
\usepackage{pgfplots} 
\usepackage{tikz} 
\usetikzlibrary{calc,spy} 
\usepackage{lscape} 
\usepackage{tabularx} 
\usepackage{adjustbox} 
\usepackage{xcolor} 
\usepackage{natbib} 
\usepackage{algorithm} 
\usepackage{algpseudocode} 
\usepackage{tablefootnote} 
\pgfplotsset{compat=1.18} 
\allowdisplaybreaks[4]
\usepackage{booktabs} 
\usepackage{longtable} 
\usepackage{array} 

\title{Joint Planning and Scheduling of Modular Vehicles for Passenger--Freight Integration}

\author{
Wanru Chen$^{1}$, Jiaming Wu$^{1}$, Balázs Kulcsár$^{2}$\\[0.4em]
\begin{minipage}{0.95\textwidth}
\centering
\small
$^{1}$Architecture and Civil Engineering, Chalmers University of Technology, Gothenburg, Sweden\\
$^{2}$Electrical Engineering, Chalmers University of Technology, Gothenburg, Sweden
\end{minipage}
}

\date{}

\date{\vspace{-4.0em}}

\begin{document}

\maketitle
\pagestyle{plain}
\pagenumbering{arabic}
\selectlanguage{american}
\begin{abstract}
This paper proposes a modular vehicle system for passenger--freight integration along a bidirectional transit corridor. 
The system uses homogeneous units that can be coupled into vehicles and assigned to either passenger or freight service. 
Freight is carried by dedicated units, with loading and unloading coordinated with docking and undocking and separated from passenger boarding and alighting. 
To better respond to uncertain passenger demand and integrate freight transport, vehicles can be reconfigured at intermediate stations, where they can also depart and terminate.
We jointly optimize departure-specific service routes, timetables, vehicle compositions, unit schedules, and passenger--freight demand assignments, with unit reuse constrained by explicit docking and undocking times. 
These decisions are modeled on a space--time--state network and formulated as a stochastic mixed-integer program that minimizes unit deployment costs, passenger waiting costs, and penalties for unmet freight demand.
Passenger demand uncertainty is addressed using linearized chance constraints. 
To solve the problem, we develop an exact Benders decomposition algorithm with valid inequalities and a warm-start strategy, together with a tailored decomposition-based heuristic for larger instances.
Computational experiments on instances generated from representative transit corridors in Gothenburg demonstrate the effectiveness of the Benders algorithm for small- and medium-sized instances and the scalability of the heuristic for larger problems.
Sensitivity analyses highlight the value of accounting for passenger demand uncertainty and the effects of temporal overlap between passenger and freight demand. 
Comparisons with benchmark transit systems further demonstrate the operational advantages of the proposed modular integrated system.
\end{abstract}

\selectlanguage{american} 

\noindent
\textit{\textbf{Keywords: }%
Passenger--freight integration; Modular vehicles; Planning and scheduling; Transit corridors; Benders decomposition; Heuristic algorithms.} \\ 

\section{Introduction}

Urban public transport services exhibit pronounced spatiotemporal fluctuations in passenger demand, creating residual capacity during off-peak periods and on less-utilized corridor segments. Passenger--freight integration has therefore attracted increasing attention as a means to improve system-wide resource utilization by using residual passenger transport capacity for urban freight \citep{mo2024synergising}. 
Yet realizing such co-transport in urban transit systems is challenging with conventional fixed-capacity vehicles, which are primarily designed and scheduled for passenger service. 
A fixed allocation of onboard space may conflict with passenger demand, while freight loading and unloading can increase dwell times and affect service reliability. 
These limitations motivate the use of modular vehicles, which provide a more flexible capacity structure for coordinating passenger and freight services \citep{zheng2025last}.

A modular vehicle system uses standardized units to form vehicles with flexible capacities. 
Standardized units can be coupled or decoupled at stations, allowing a vehicle formation to expand on high-demand passenger segments and shorten when demand is low. Coupled passenger units are connected by interior doors, allowing passengers to move freely within the formed vehicle \citep{wu2021modular, li2022trajectory}. 
This structure can be extended to passenger--freight integration by assigning homogeneous units to different functions, as shown in Figure~\ref{fig:modular adaption example}, so that residual passenger transport capacity can be converted into freight-carrying capacity. In this proposed system, freight is carried in standardized boxes or parcels within freight-assigned units, which helps separate passenger and freight operations and reduce operational conflicts \citep{lin2024modular}. 
The function of each unit can be adjusted between passenger and freight service over time through operational assignment and vehicle composition decisions, thereby improving system utilization and reducing the need for separate freight delivery operations \citep{gao2026evaluating}.

\begin{figure}[t]
\centering
\includegraphics[width=0.95\linewidth]{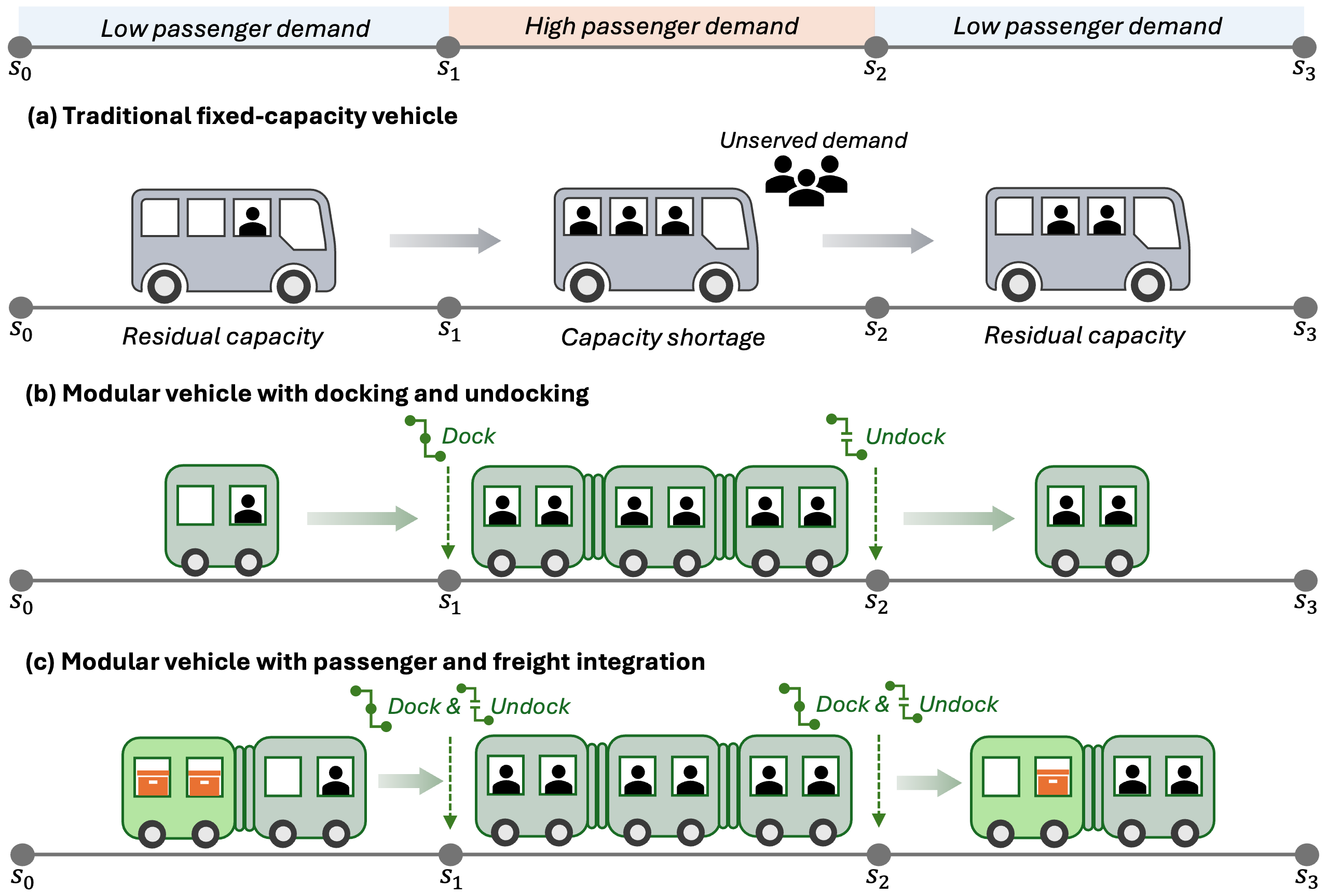}
\caption{Illustration of capacity adaptation via docking and undocking in modular vehicles.}
\label{fig:modular adaption example}
\end{figure}

Planning and scheduling are central to realizing the operational value of modular passenger--freight systems, as they determine how service quality and operating costs are balanced. 
In modular vehicle operations, these decisions jointly involve flexible service routes, station-level timetables, segment-level vehicle compositions, and unit schedules. 
A service may start or terminate at an intermediate station, its arrival and departure times must be determined along the served segments, and its composition must specify both the number of coupled units and their allocation between passenger and freight functions. 
At the same time, these planned services are feasible only if the required units can be stored, reused, docked, and undocked at the right stations and times. 
This creates a tightly coupled planning and scheduling problem that cannot be captured by models treating routing, timetabling, vehicle composition, and unit circulation separately. 
Existing modular vehicle studies have addressed parts of this structure, but the joint treatment of these interdependent decisions remains limited.

These planning and scheduling decisions are ultimately driven by the demand they serve. 
Passenger demand is usually uncertain and sensitive to service quality, while freight demand can often be served more flexibly by using spare capacity \citep{cacchiani2020robust,li2026joint}. A key practical challenge is to handle freight without disrupting passenger service. Many studies simplify loading and unloading by assuming item-by-item handling within normal dwell times \citep{lin2024modular, li2024scheduling}. In a modular system, freight can be carried in dedicated units, so loading and unloading can be managed through unit docking and undocking at stations rather than within passenger dwell times.

Figure~\ref{fig:modular integration illustration} illustrates coordinated passenger--freight operations in the proposed modular system. At $S_1$, the vehicle arrives from a low-demand segment with one passenger unit and one freight unit. Before the next high-demand segment, the freight unit is undocked and unloaded, while ready units from the station pool are docked as passenger units to increase capacity. Since passengers can move between coupled units, they can redistribute based on destinations before reaching $S_2$. At $S_2$, units of passengers who arrived at their desired stop are undocked without affecting passengers who continue their trips, and a pre-loaded freight unit is docked for the following low-demand segment. 
The figure highlights three modeling challenges: Passenger service is provided by the coupled vehicle as a whole, freight handling relies on dedicated units, and capacity changes require docking and undocking operations with explicit process times and station-level unit availability.

\begin{figure}[t]
\centering
\includegraphics[width=0.95\linewidth]{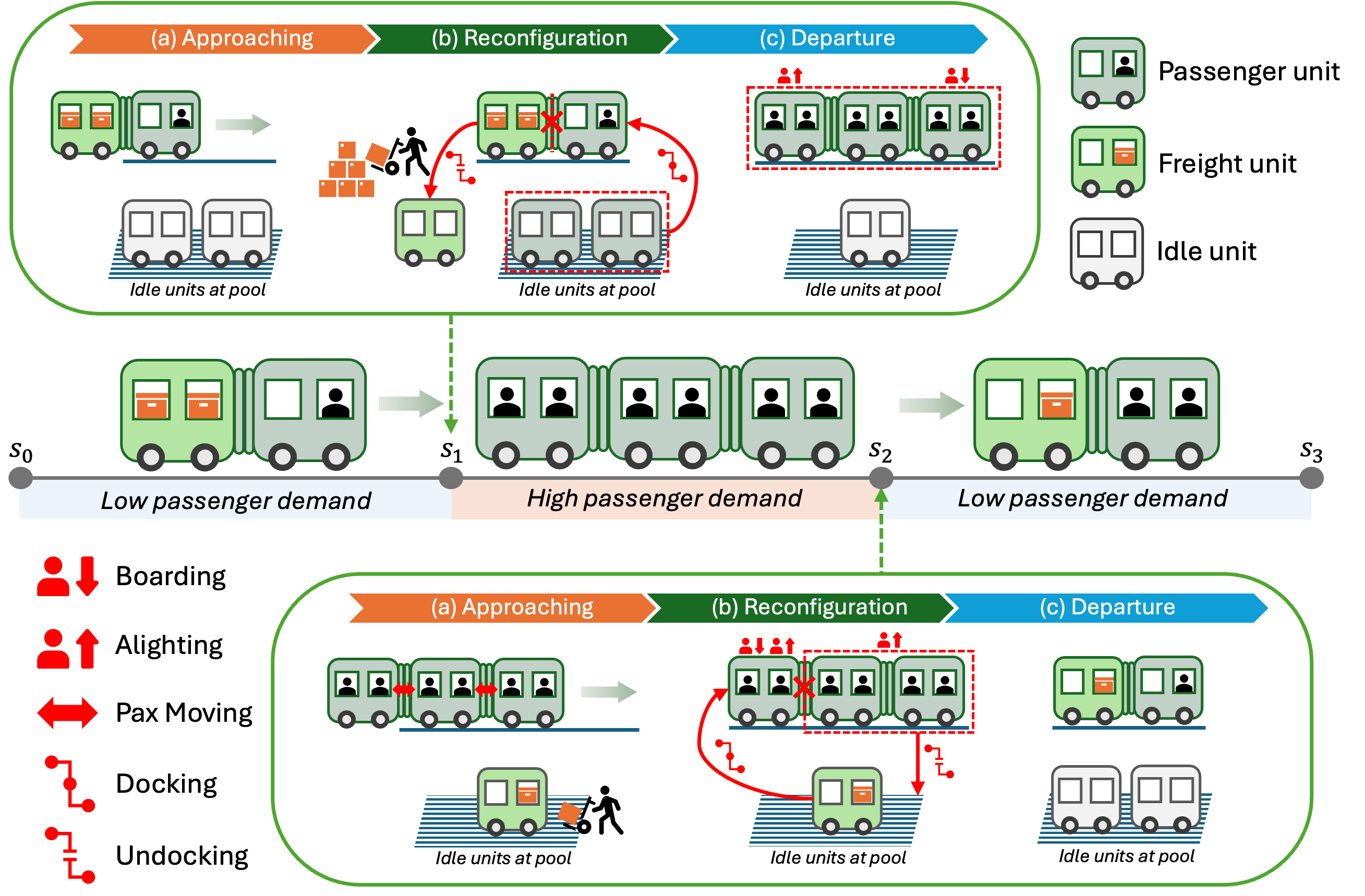}
\caption{Illustration of passenger--freight operations and vehicle reconfiguration in a modular vehicle system.}
\label{fig:modular integration illustration}
\end{figure}

Building on these system features, this study addresses the joint planning and scheduling of modular vehicles for passenger--freight integration on a bidirectional transit corridor. Each station maintains a pool of modular units that can be deployed by services in either direction. The problem jointly determines flexible departure-specific service routes, station-level timetables, segment-level vehicle compositions, unit schedules, and passenger--freight demand assignments. Passenger demand is stochastic, whereas freight demand is deterministic and may be only partially served. Freight loading, unloading, and transfer are executed via the docking and undocking of dedicated freight units.

The methodological challenge is that reconfiguration decisions are neither purely service-design decisions nor purely vehicle-scheduling decisions. A departure may exist only if passenger and freight capacities, station-level unit availability, docking and undocking times, and downstream reuse opportunities are simultaneously consistent. 
To model these interdependent decisions, we propose a tailored space--time--state network and a stochastic mixed-integer network-flow formulation that supports different operational settings through adaptations of the network structure and model constraints. 
Passenger assignment is reformulated in a path-based flows and enforced through chance constraints. 

The proposed network reveals two key structures that guide the solution approach. First, service and reconfiguration planning can be distinguished from station-wise unit circulation, which motivates an exact multi-cut Benders decomposition algorithm strengthened with valid inequalities and a warm-start strategy. Second, passenger and freight services follow different network structures but are unified through the global circulation of modular units, which motivates a two-stage decomposition-based heuristic for large instances under a passenger-priority setting. 
We evaluate the proposed model and algorithms on instances motivated by representative transit corridors in Gothenburg. The experiments assess algorithmic performance, quantify the impacts of passenger-demand uncertainty and passenger--freight temporal overlap, and demonstrate the operational benefits of modular passenger--freight integration relative to benchmark systems. 

The contributions of this paper are threefold:
\begin{itemize}
    \item We develop a joint planning and scheduling framework for modular vehicles that adapts service capacity to heterogeneous passenger and freight demand while ensuring feasible unit operations.
    \item We address the complex coupling among decisions through decomposition-based solution methods, including an exact Benders decomposition algorithm enhanced by valid inequalities and a  two-stage heuristic for large instances.
    \item We evaluate the model and algorithms on realistic corridor instances, deriving operational insights and demonstrating the value of modular vehicles for urban passenger--freight integration.
\end{itemize}

The remainder of this paper is organized as follows. Section~\ref{Literature Review} reviews the related literature, and Section~\ref{sec: Problem Statement} introduces the problem setting. Sections~\ref{Network Construction} and~\ref{Model Formulation} present the network construction and model formulation, followed by the solution methodology in Section~\ref{Solving Methodology}. Sections~\ref{sec:exper design} and~\ref{Computational Performance} report the experimental design and computational results, including sensitivity analysis in Section~\ref{Sensitivity Analysis} and comparisons with benchmark systems in Section~\ref{Comparison with Systems}. Section~\ref{Conclusions and Future Research} concludes and discusses future research directions.

\section{Literature Review}
\label{Literature Review}

This section reviews the literature related to modular passenger--freight integration. 
Urban passenger--freight studies establish the application context and reveal the limits of fixed service structures, while modular vehicle planning and scheduling studies provide the operational basis for flexible capacity reconfiguration. 
Together, these two streams motivate the integrated modeling and solution approach developed in this paper.

\subsection{Passenger and Freight Integration in Urban Transit Systems}

Integrating passenger and freight flows in urban public transit has attracted increasing attention as a way to make better use of existing capacity and to reduce the cost and environmental impacts of operating separate urban freight services. The basic idea is to use residual transit capacity to serve freight demand while maintaining acceptable passenger service quality. Existing studies examine this concept in bus systems, metro systems, and, more recently, modular vehicle systems.

In bus systems, integration is often implemented by using scheduled buses to carry parcels through existing services or services with minor adjustments. Studies examine service adaptation, such as selecting routes or trips for parcel transport \citep{zeng2022optimization, machado2023integration, machado2025operational}, and capacity sharing decisions that coordinate passenger demand, freight demand, and onboard space allocation \citep{li2022capacity}. Recent work also considers uncertainty through stochastic frameworks with passenger priority and time varying capacity constraints \citep{lee2025integrated, zhang2026two}.

In metro systems, integration is mainly studied through shared rail capacity and underground logistics operations, with potential benefits for reducing road based freight emissions \citep{di2023research}. Studies consider freight movements through subway networks under constraints related to train timing, capacity, transfers, safety, and station operations \citep{zhou2025using}, shared passenger and freight transport during off peak hours with flexible train composition \citep{li2024scheduling}, and separate freight train services with limited adjustments to passenger timetables \citep{li2026joint}.

Overall, studies on bus systems and metro systems provide important foundations for integration in public transit. However, flexibility is constrained by the physical and operational structure of each mode. In bus systems, freight is typically inserted into existing vehicles or selected trips. In metro systems, operations must follow fixed rail infrastructure, strict timetables, and limited station handling opportunities. As a result, most models treat passenger and freight integration as allocating residual capacity within a largely fixed service structure, rather than reconfiguring capacity dynamically across service segments.

Modular vehicle systems offer a promising way to overcome the capacity inflexibility of conventional bus- and metro-based integration, since vehicle capacity can be adjusted through docking and undocking.
In a transit corridor setting, \citet{lin2024modular} study passenger--freight co-transportation with modular vehicles and optimize dispatching, module allocation, and station-level docking and undocking decisions.
Their study demonstrates the potential of modular reconfiguration, but reconfiguration and freight-handling times are not fully incorporated into scheduling feasibility.
As a result, the effects on unit availability, vehicle departures, and subsequent vehicle formations may be underestimated. 
Related studies also examine modular passenger--freight services in routing contexts, including modular vehicle routing for combined passenger and freight transport \citep{hatzenbuhler2023modular} and extensions to first-mile and last-mile ridesharing and distribution \citep{sun2025integrating,zheng2025last}. 

\subsection{Planning and Scheduling of Modular Vehicles}

In public transport systems, planning and scheduling are closely related but refer to different decisions. 
Planning determines what services are provided and how capacity is supplied, whereas scheduling determines how operational resources are coordinated to deliver these services. 
In modular vehicle systems, planning includes decisions on service coverage, timetables, and vehicle formation, while scheduling determines how individual units are deployed and circulated over time and space.
These decisions are more tightly coupled than in conventional transit systems because any planned capacity change must be supported by feasible unit movements and station-level operations.

Given the computational difficulty of joint planning and scheduling, much of the modular-vehicle literature has focused on planning-level capacity adaptation. 
\citet{chen2020operational} and \citet{shi2021operations} jointly optimize headways and vehicle capacities under oversaturated demand, showing that modularity can better match supply to time-varying passenger demand.
\citet{gong2021transfer} extends this direction by incorporating passenger route assignment in a transfer-based customized bus system. 
Other studies move from departure-level capacity adjustment to en-route reconfiguration.
\citet{chen2021designing} and \citet{chen2022continuous} model corridor operations with station-wise docking and undocking, while \citet{tian2022planning} studies multi-period decisions on the locations and capacities of stations supporting modular operations.
\citet{khan2025stopless} proposes a stop-less autonomous modular bus service, where modular units attach and detach at stops to reduce unnecessary stopping.
These studies demonstrate the planning value of modular vehicles, but mainly focus on passenger-oriented service design and capacity allocation, with limited consideration of unit circulation and reuse over space and time.

When unit availability and movements are modeled explicitly, modular vehicle planning becomes tightly linked with scheduling. 
Formation changes require docking and undocking operations, and service plans are feasible only if the required units can be circulated, rebalanced, stored, and reused at the right stations and times. 
\citet{tian2023joint} jointly optimize service schedules and vehicle formation on a single line while considering limited unit availability and rebalancing. 
\citet{liu2021improving} extends modular operations to flex-route services, and \citet{tang2024optimisation} studies a hybrid system in which modular autonomous vehicles decouple from and recouple to a fleet traveling along a fixed base route. 
\citet{xia2026integrated} address integrated timetabling and scheduling under uncertainty in a modularized bus network, allowing units to be docked, undocked, and rerouted across lines. 
These studies move toward integrated planning and scheduling, but most are developed for passenger-only services, fixed-line settings, or predefined service structures.

Beyond transit service design, modularity has also been studied in broader routing and resource-coordination problems, including last-mile delivery \citep{rezgui2019application}, emergency medical services \citep{hannoun2022modular}, modular vehicle routing with capacity and time-window constraints \citep{zhou2025modular}, and hub-and-spoke public transport with modular autonomous vehicles \citep{wang2025optimizing}. 

The modular passenger--freight integration considered in this paper further expands the decision scope. 
Existing modular transit models often rely on terminal-to-terminal operations, where timetabling and unit circulation can be organized around terminal departures, arrivals, and end-of-line reuse. 
However, modular services may start or terminate at intermediate stations to better match spatially uneven passenger and freight demand. 
This changes the problem structure: timetabling must determine arrivals and departures at intermediate stations, while unit scheduling must track unit availability, storage, reuse, docking, and undocking beyond terminals. 
The coupling is further strengthened when formations specify passenger and freight unit allocations on each segment, and when docking and undocking process times affect departure feasibility and downstream unit availability. 
Thus, modular passenger--freight integration requires the joint determination of departure-specific service routes, station-level timetables, segment-level passenger/freight formations, and feasible unit schedules. 

From a methodological perspective, integrated modular vehicle planning and scheduling problems are typically formulated as large-scale mixed-integer models, which require problem-specific solution strategies.
Exact formulations with general-purpose solvers can handle small instances, as shown by the MILP reformulation of \citet{tian2023joint} and the MINLP solution procedure of \citet{tang2024optimisation}.
For realistic instances, studies often introduce decomposition, rolling-horizon optimization, and tailored heuristics to reduce computational burden \citep{liu2021improving,tian2023joint,xia2026integrated}.
These studies indicate that no generic algorithm can efficiently solve this class of tightly coupled problems. Solution methods must instead be tailored to the structure of the model.
In this paper, the coupling among decisions motivates decomposition-based methods that yield exact solutions for small instances and efficient solutions for larger instances.

\section{Problem Statement}
\label{sec: Problem Statement}

This study considers the joint planning and scheduling of modular vehicles for integrated passenger and freight services on a bidirectional corridor. 
The problem decides service coverage over corridor segments, station arrival and departure times, passenger and freight unit combinations on each segment, and the storage, reuse, docking, and undocking of individual units at stations.
These decisions are strongly coupled because station-level unit availability determines whether a planned vehicle formation can be operated, while docking and undocking process times affect both vehicle departures and unit reuse. 
The coupling is further shaped by the different operating requirements of passengers and freight, with passengers using shared capacity across coupled units, while freight relying on dedicated units for loading, unloading, and transfer at stations.

This section first introduces the key elements of the problem setting, including the transit corridor, the modular vehicle system, and passenger and freight demand characteristics. These are necessary for understanding the problem and clarifying the  scope of the optimization problem studied in this paper.

\subsection{Transit Corridor}

We consider a bidirectional transit corridor with a set of stations \(S\), indexed by \(s, s' \in S\). The two end stations are terminals, and the remaining stations are intermediate stops arranged sequentially between the terminals.

\begin{figure}[ht]
\centering
\includegraphics[width=0.9\linewidth]{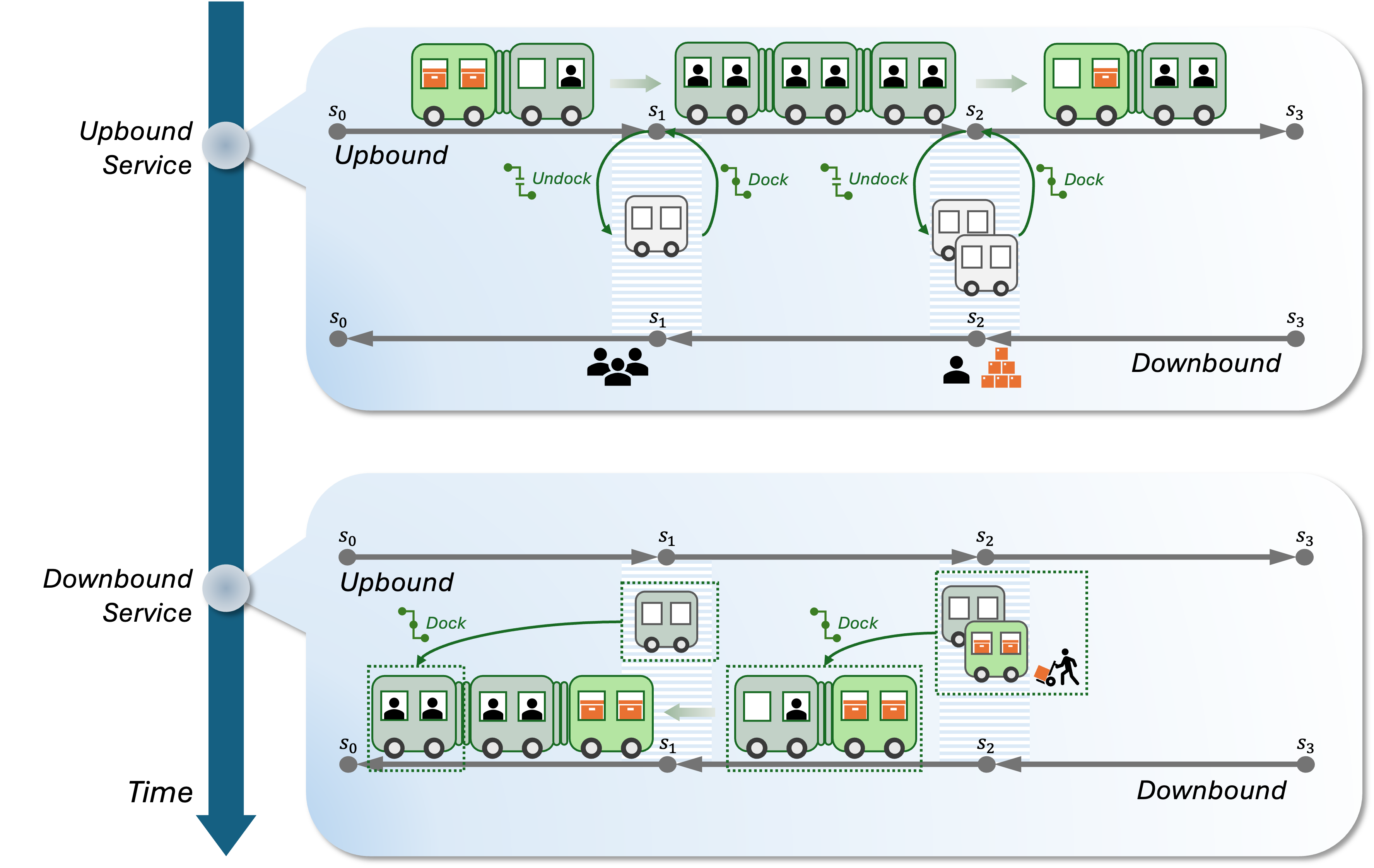}
\caption{Illustration of a modular vehicle system on a bidirectional transit corridor.}
\label{fig:modular bidirectional illustration}
\end{figure}

Each station has two dedicated platforms or tracks, one for upbound vehicles and the other for downbound vehicles. 
Each station \(s \in S\) also has a limited storage capacity \(\overline{n}_s\) for parked units, which is the maximum number of units that can be temporarily held at the station. 
As shown in Figure~\ref{fig:modular bidirectional illustration}, units released by one service can be stored at the station and reused by a later service in either direction.
Distances between adjacent stations and station dwell times are known at the planning stage. The corridor is a dedicated right-of-way with minimal external interference, which supports stable operations. We therefore assume a constant travel speed and deterministic travel times between adjacent stations, known at the planning stage.

\subsection{Modular Vehicle}

A modular vehicle consists of multiple self-propelled units that can operate either jointly or independently. Unlike conventional bus systems, modular services are not restricted to terminal-to-terminal trips: vehicles may depart from intermediate stations when units are available and may also terminate at non-terminal stations.

All units are physically homogeneous and share identical technical characteristics. Once docked, each unit is assigned either a passenger or freight function, which remains unchanged until the unit is undocked. Consequently, passenger and freight services are not mixed within a single unit. Vehicle capacity depends on both the number of docked units and their functional assignment. To ensure operational feasibility, the total vehicle length is limited by an upper bound $l$.

Docking and undocking enable flexible vehicle composition and en-route capacity adjustment, but they require non-negligible processing times that affect unit circulation and departure feasibility. These times are often assumed to be instantaneous or absorbed into dwell times in the literature, which can lead to infeasible schedules in practice. In this study, docking and undocking times are modeled explicitly to ensure temporal feasibility.

\subsection{Passenger Demand Group}

Passenger demand is represented by a set of groups \(P\). Each group \(p \in P\) consists of passengers sharing the same origin station \(o_p \in S\), destination station \(\delta_p \in S\), arrival time \(t_p\), and delivery time window \([a_p, b_p]\). 
Here, \(t_p\) denotes the time when passengers become available for boarding at \(o_p\), and \([a_p, b_p]\) specifies the required arrival window at \(\delta_p\).
The number of passengers in group \(p\) is uncertain and is modeled as a random variable \(\tilde{q}_p\). 
Because each group represents aggregated demand over a service-related time interval rather than individual passenger arrivals, we adopt a normal approximation, i.e., \(\tilde{q}_p \sim \mathcal{N}(\mu_p, \sigma_p^2)\) \citep{liang2023online}. 

Each passenger group \(p\) has a travel direction \(k_p\), either upbound or downbound. Passengers are assumed to remain on the same vehicle once boarded, which is consistent with direct corridor trips. Within a vehicle, passengers can move between docked units through inter-unit passageways en route. If needed, they may be guided (e.g., via a mobile application) to relocate across units so that undocking at intermediate stations does not interrupt their trips. Accordingly, passenger capacity is modeled at the vehicle level.

Passenger assignment is not restricted to a first-come-first-served rule. Given flexible routes and limited capacity, a single departure may not serve all waiting passengers. Demand from the same group may be split across multiple vehicles, which helps balance system utilization. Passenger assignment is determined by the delivery time windows \([a_p, b_p]\), and total waiting time is included in the objective.

\subsection{Freight Demand Group}

Freight demand is represented by a set \(F\). Each request \(f \in F\) is characterized by an origin station \(o_f \in S\), a destination station \(\delta_f \in S\), a deterministic demand quantity \(q_f\), a delivery time window \([a_f, b_f]\), and a travel direction \(k_f\). 
Freight is standardized and measured in discrete boxes. Accordingly, \(q_f\) denotes the number of boxes to be transported for request \(f\).
The time window \([a_f, b_f]\) specifies that service cannot start before \(a_f\) and must be completed no later than \(b_f\). 
Freight demand may be split and need not be fully satisfied, since passenger service is given priority and any unmet quantity incurs a penalty in the objective.

Freight handling follows a unit-based design. Most existing studies assume item-level handling within short dwell times. However, such operations may be difficult in practice and may interfere with passenger boarding and alighting. In this study, freight loading and unloading are performed only at the unit level and only at stations. Boxes are packed into dedicated freight units during or before the docking process, after which these units are attached to vehicles for transportation along the corridor. Similarly, boxes can be unloaded only when the corresponding freight unit is undocked at a station.

Freight transfers are allowed at en-route stations. 
A freight transfer refers to unloading freight from one freight unit and reloading it onto a freight unit of a subsequent service for continued transportation. 
During this process, freight may be temporarily stored at the station and consolidated with other freight. 
To avoid detours and improve capacity utilization, freight transfers are restricted to services traveling in the same direction. 

\subsection{Optimization Problem}

\begin{figure}[t]
\centering
\includegraphics[width=0.9\linewidth]{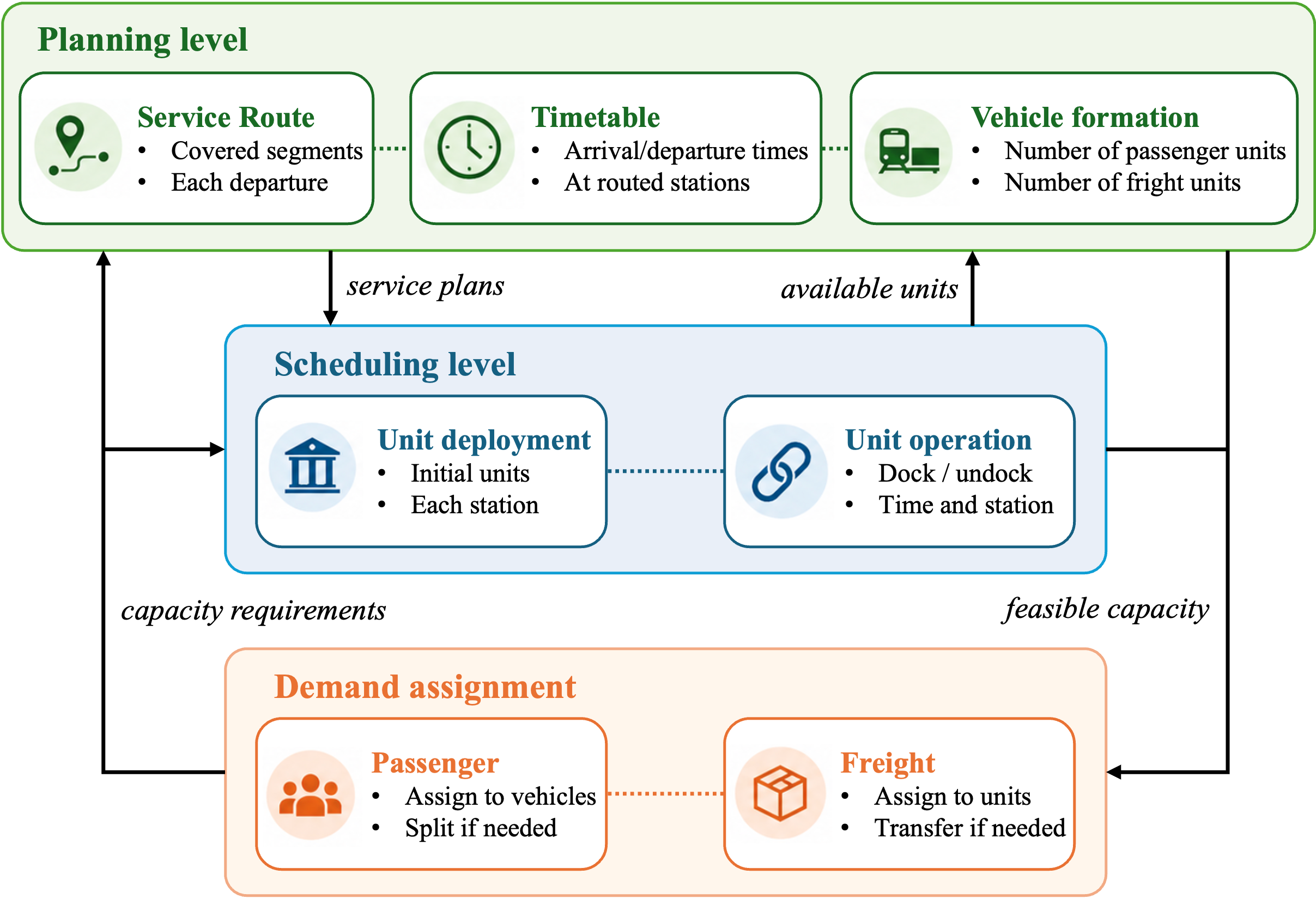}
\caption{Decision overview of the modular passenger--freight planning and scheduling problem.}
\label{fig:decision overview}
\end{figure}

Given the setting above, the optimization problem jointly determines the planning and scheduling of modular vehicles in a bidirectional corridor setting. Figure~\ref{fig:decision overview} provides an overview of the decision structure, which consists of four main decision components:

\begin{itemize}

    \item \textbf{Service plan for modular vehicles.}
    This component determines the service route of each vehicle along the corridor, the station departure timetable subject to minimum headway constraints, and the vehicle composition on each route segment in terms of the number of docked units and their passenger or freight functions.
    
    \item \textbf{Unit scheduling.}
    The service plan is implemented through detailed schedules for individual units. For each unit, this component determines its sequence of operations over time and space, including when and where it is docked to or undocked from a vehicle, its assigned function (passenger or freight) while attached to a vehicle, and whether it is assigned to successive services or remains idle in the station pool.

    \item \textbf{Passenger flow assignment.}
    Passenger demand is assigned to vehicle departures to satisfy delivery time windows. Passengers remain on the same vehicle once boarded, but may relocate across units within a vehicle through inter-unit passageways. Demand from a group may be split across multiple departures to accommodate capacity constraints and improve system efficiency.

    \item \textbf{Freight flow assignment.}
    Freight demand is assigned and handled at the unit level. Loading and unloading are coordinated with docking and undocking operations at stations. Freight transfers between services are allowed at intermediate stations, but are restricted to services traveling in the same direction to avoid detours. Any unmet demand incurs a penalty cost in the objective.

\end{itemize}

\section{Network Construction}
\label{Network Construction}

The planning and scheduling of modular vehicles, together with the assignment of passenger and freight flows are coupled across space, time, and operational states. To capture these interdependencies within a unified optimization framework, we construct a space--time--state network. For clarity, this section first introduces the underlying space--time network as the foundation and then extends it with operational states to construct the complete space--time--state network. Finally, we define the flow representations and show how key system processes are modeled as network flows.

\subsection{Initial Space--Time Network}
The planning horizon $[\underline{t},\overline{t}]$, with duration $H=\overline{t}-\underline{t}$, is discretized into minutes and represented by the time set $T$, where $\underline{t}$ and $\overline{t}$ denote its start and end, respectively. 
The space--time network is defined as $G=(N,E)$, where each node $(s,t)$ represents station $s \in S$ at time $t \in T$.

Arcs represent feasible unit trajectories and are divided into service arcs and storage arcs. A service arc connects $(s,t)$ to $(s',t')$ if $s \neq s'$, $s$ and $s'$ are adjacent stations, $t' > t$, and $t'-t$ equals the fixed travel time from $s$ to $s'$ plus the dwell time at $s'$. Since travel speeds and dwell times are fixed, all feasible movements between adjacent stations can be enumerated in advance, and the corresponding service arcs can be constructed over the time indices in $T$. Service arcs are further classified as upbound or downbound according to travel direction.

Passenger and freight movements are modeled in a unified arc framework by indexing arcs by units. Specifically, we define \(U=\{u_0\}\cup U_f\), where \(u_0\) represents all coupled passenger units as one virtual unit because they form a shared passenger space within the vehicle. The set $U_f$ denotes potential freight units, and its size is bounded by the maximum number of freight-dedicated units allowed in a vehicle.

For each feasible movement between adjacent stations and each departure time $t \in T$, we construct one service arc for $u_0$ to represent passenger service and one service arc for each $u \in U_f$ to represent freight service at the unit level. These parallel arcs provide the network basis for representing vehicle composition.

\begin{figure}[t]
  \centering
  \begin{subfigure}{0.32\textwidth}
    \centering
    \includegraphics[width=1.1\linewidth]{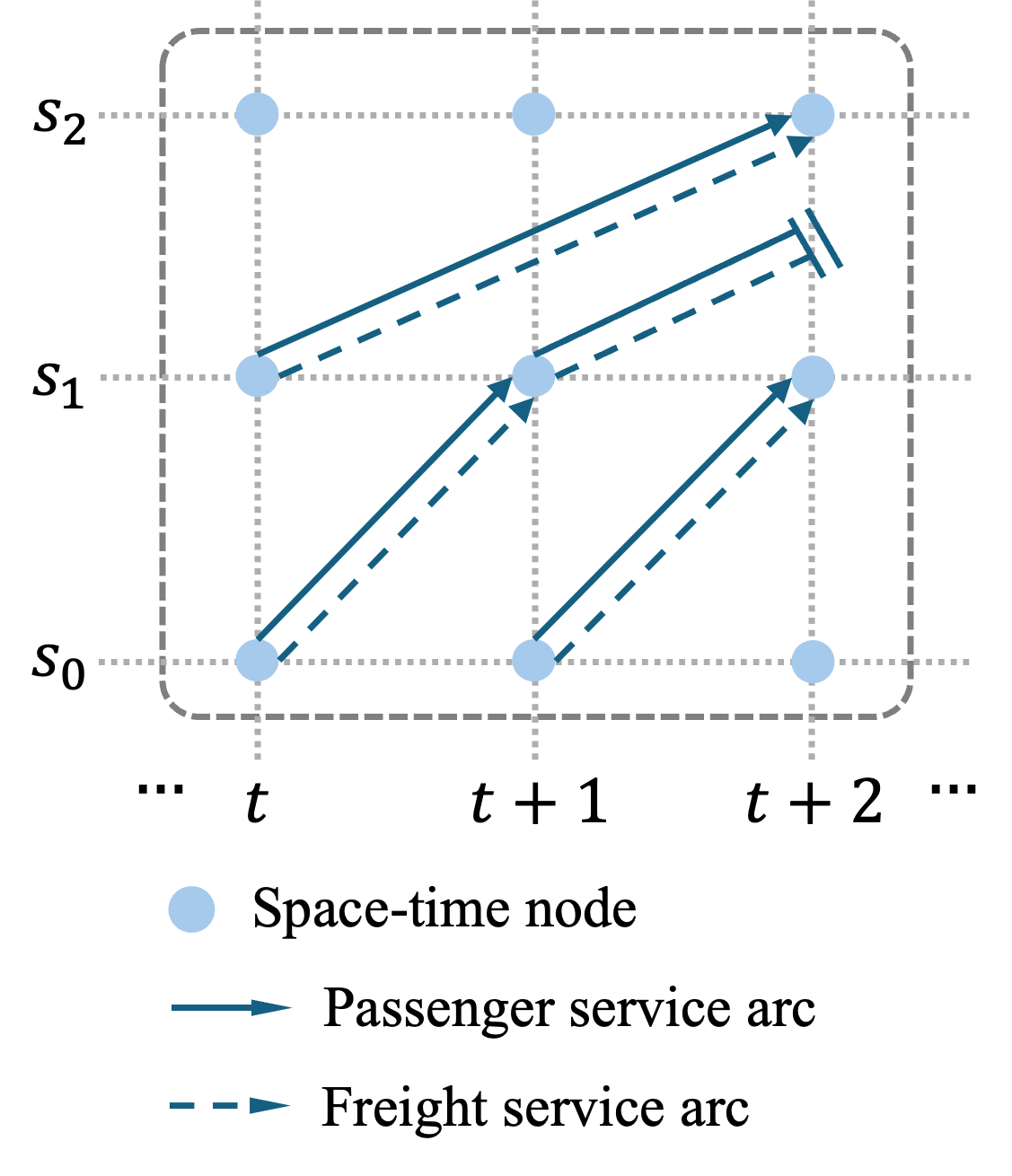}
    \caption{Upbound Service}
    \label{fig:upbound_network}
  \end{subfigure}
  \hfill
  \begin{subfigure}{0.32\textwidth}
    \centering
    \includegraphics[width=1.1\linewidth]{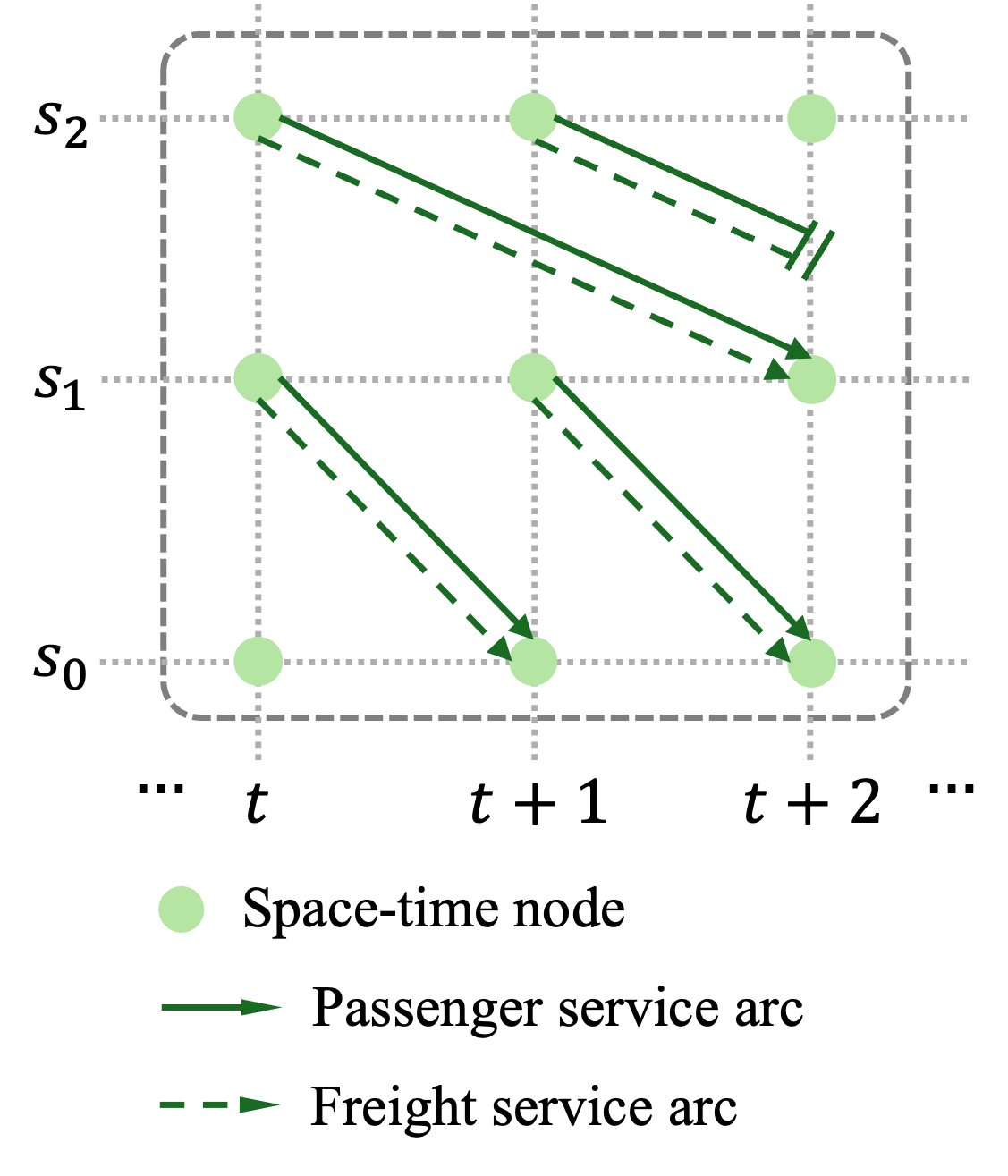}
    \caption{Downbound Service}
    \label{fig:downbound_network}
  \end{subfigure}
  \hfill
  \begin{subfigure}{0.32\textwidth}
    \centering
    \includegraphics[width=1.1\linewidth]{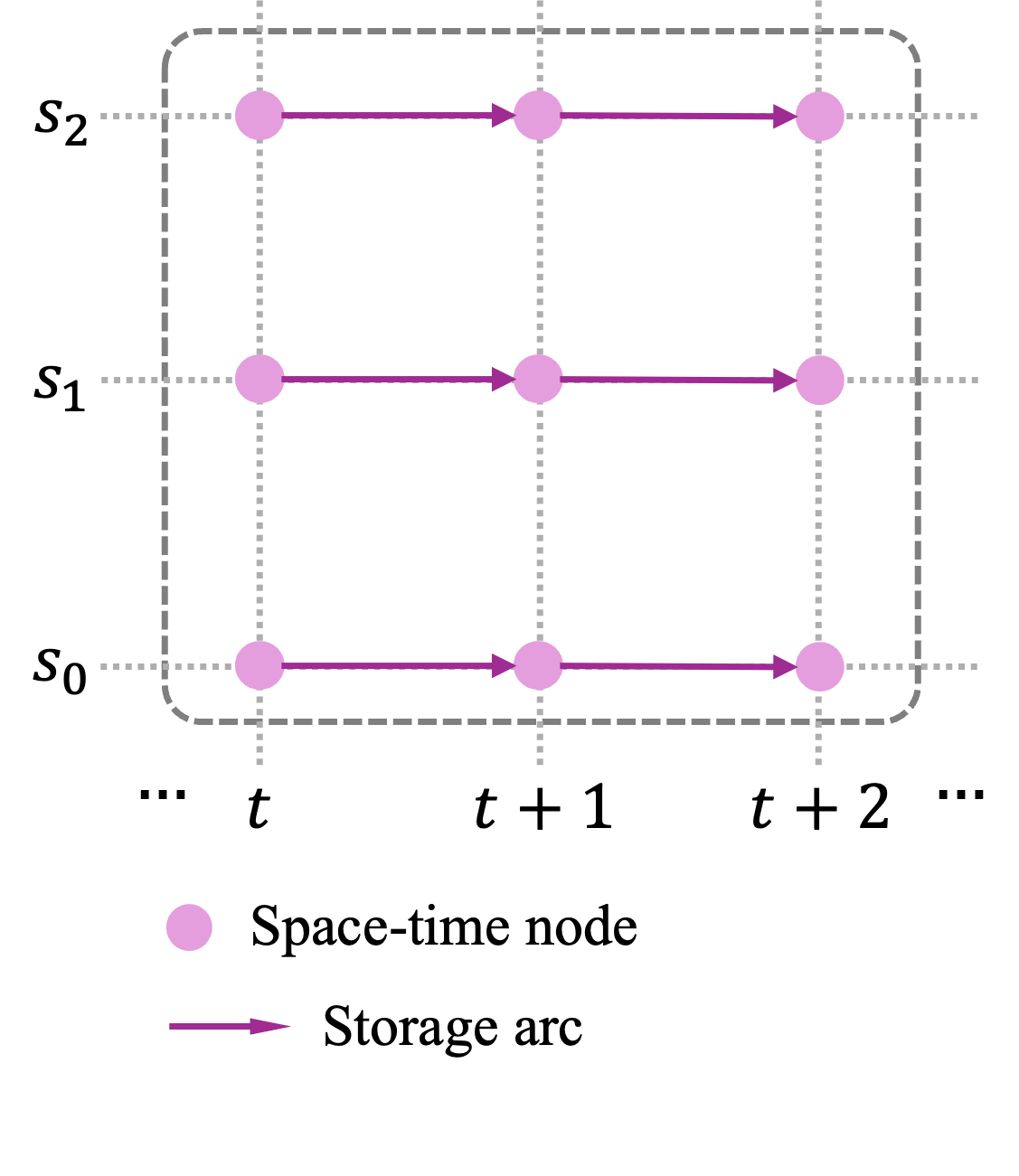}
    \caption{Storage}
    \label{fig:storage st network}
  \end{subfigure}
  \caption{Illustration space-time network for service and storage arcs.}
  \label{fig:st network}
\end{figure}

To illustrate the construction of the space--time network, Figure~\ref{fig:st network} presents a simple example with three stations and three consecutive time points. The travel time, including dwell time, is 1 minute between $s_0$ and $s_1$, and 2 minutes between $s_1$ and $s_2$. Each node represents a station--time pair. Figures~\ref{fig:upbound_network} and~\ref{fig:downbound_network} show the service arcs for the upbound and downbound directions, respectively. In this example, each vehicle can include at most one freight-dedicated unit, so each feasible movement is represented by one passenger service arc and one freight service arc.

Storage arcs represent units idling at stations. A storage arc connects consecutive nodes $(s,t)$ and $(s,t+1)$ at the same station, indicating that a unit remains in the station pool during the interval $[t,t+1)$. Unlike service arcs, storage arcs do not distinguish functional roles: once a unit is undocked, it is treated as a homogeneous resource regardless of its previous assignment and can be reassigned to either passenger or freight service in a later departure as needed. If heterogeneous units are considered, storage arcs can be defined separately for passenger and freight units in the same way as service arcs.

Using the same example, storage arcs are added at each station to complete the space--time network, as shown in Figure~\ref{fig:storage st network}. 
These arcs represent units staying in the station pool, with consecutive storage arcs capturing continuous idling at the same station.

\subsection{Space--Time--State Network}

We define the state set as $K=\{k_1,k_2,k_3\}$, where each modular unit can be in one of three states: upbound ($k_1$), storage ($k_2$), or downbound ($k_3$). To capture transitions between these states, we extend the initial space--time network to a space--time--state network $\mathcal{G}=(\mathcal{N},\mathcal{E})$. In this network, each node $(s,t,k)$ represents a unit at station $s \in S$ and time $t \in T$ in state $k \in K$. For clarity, we denote by $\mathcal{N}_1$, $\mathcal{N}_2$, and $\mathcal{N}_3$ the subsets of nodes corresponding to the upbound, storage, and downbound states, respectively.

Arcs in the space--time--state network fall into three categories. Service arcs are defined on the upbound ($k_1$) and downbound ($k_3$) planes and are inherited from the initial space--time network, representing movements between adjacent stations in the corresponding direction. Storage arcs are defined on the storage plane ($k_2$) and connect consecutive time nodes at the same station, representing units remaining in the station pool.

To model docking and undocking, we introduce reconfiguration arcs that connect nodes across different states. At station $s \in S$, an arc from $(s,t,k_1)$ or $(s,t,k_3)$ to $(s,t+\tau^{ud},k_2)$ represents an undocking operation in which units leave a vehicle and enter the station pool. Conversely, an arc from $(s,t,k_2)$ to $(s,t+\tau^{dk},k_1)$ or $(s,t+\tau^{dk},k_3)$ represents a docking operation in which units leave the station pool to join a vehicle. The parameters $\tau^{dk}$ and $\tau^{ud}$ denote the docking and undocking process times, respectively, and are captured by the time spans of the corresponding arcs.

Similar to service arcs, reconfiguration arcs are indexed by unit type to enforce consistent vehicle composition. For the virtual passenger unit $u_0$, each operation is represented by a single aggregated arc, whereas for freight units $u \in U_f$, separate arcs are constructed to capture unit-level operations.

\begin{figure}[t]
\centering
\begin{subfigure}{0.32\textwidth}
    \centering
    \includegraphics[width=1.1\linewidth]{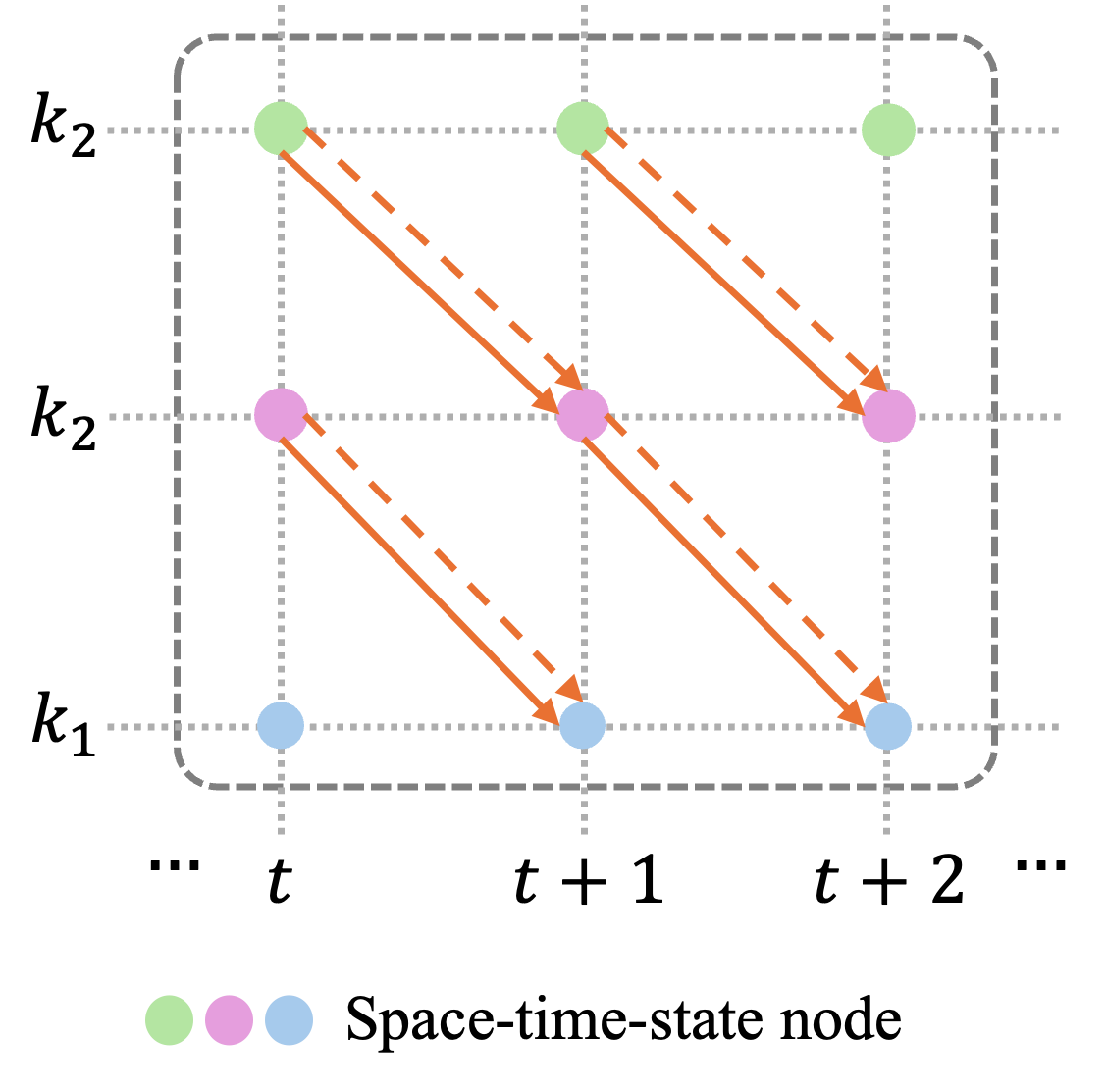}
    \caption{Terminal station $s_0$}
    \label{fig:Terminal Station s1}
\end{subfigure}
\hfill
\begin{subfigure}{0.32\textwidth}
    \centering
    \includegraphics[width=1.1\linewidth]{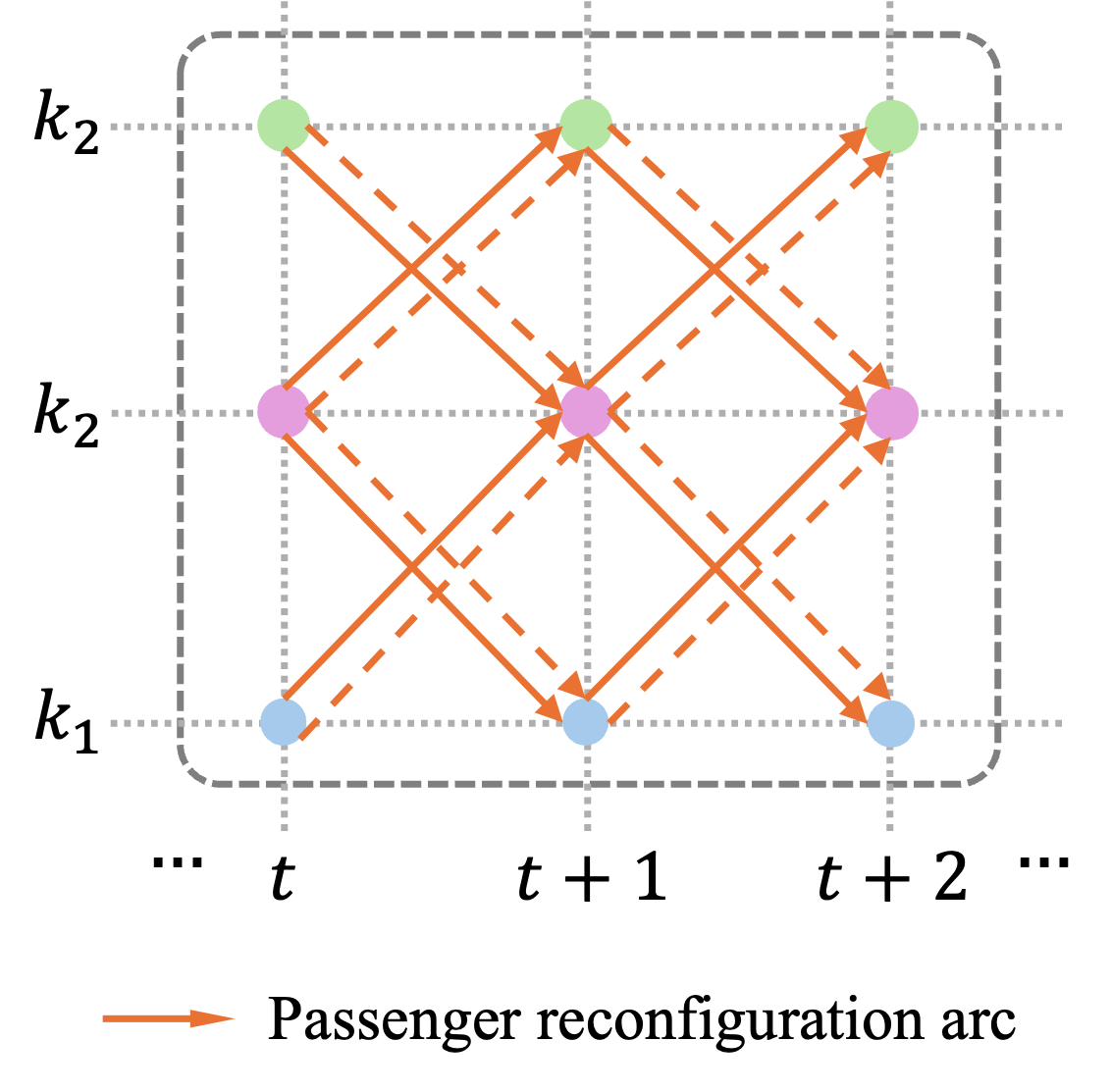}
    \caption{Intermediate station $s_1$}
    \label{fig:Intermediate Station s2}
\end{subfigure}
\hfill 
\begin{subfigure}{0.32\textwidth}
    \centering
    \includegraphics[width=1.1\linewidth]{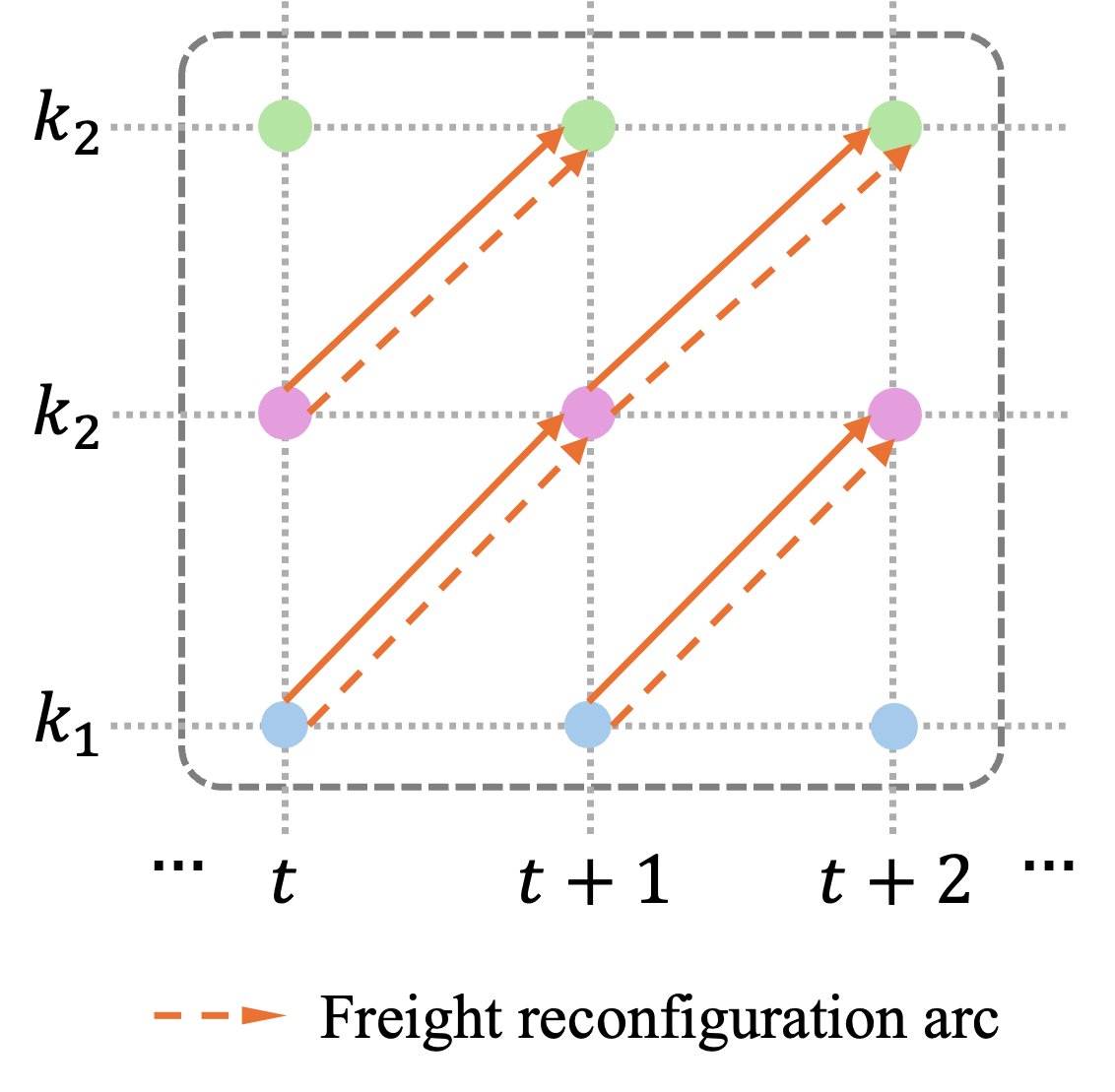}
    \caption{Terminal station $s_2$}
    \label{fig:Terminal Station s3}
\end{subfigure}

\caption{Illustration of reconfiguration arc construction at intermediate and terminal stations.}
\label{fig:reconfiguration arcs}
\end{figure}

Recall the corridor with three stations $s_0$, $s_1$, and $s_2$, where $s_0$ and $s_2$ are terminal stations and $s_1$ is an intermediate station. Introducing the state dimension unifies the two directional space--time networks into a single space--time--state network. With 1-minute docking and undocking times, reconfiguration arcs are added to represent feasible unit transitions at stations. Figure~\ref{fig:reconfiguration arcs} illustrates the construction, where solid and dashed arcs denote passenger and freight reconfiguration arcs, respectively.

At the intermediate station $s_1$, units can be undocked from arriving services (in either direction) into the storage state, and stored units can be docked to departures in either direction. We therefore add reconfiguration arcs between the corresponding station--time--state nodes. At terminal stations, reconfiguration is one-sided: at $s_0$, arriving downbound units enter storage, and any departing units can only join upbound services; at $s_2$, arriving upbound units enter storage, and any departing units can only join downbound services.

For each station, the storage nodes at times $\underline{t}$ and $\overline{t}$ define the start and end of unit schedules. To maintain feasibility at the boundaries of the planning horizon, we add auxiliary connections when a docking or undocking operation would otherwise extend beyond the horizon. If docking would start before $\underline{t}$, the departure node is connected directly to the storage node at $\underline{t}$. If undocking would end after $\overline{t}$, the arrival node is connected directly to the storage node at $\overline{t}$.

Each arc $e \in \mathcal{E}$ is associated with a travel or processing time $\tau_e$, which determines the time difference between its start and end nodes. For convenience, we define several arc subsets. For each $u \in U$, let $\mathcal{E}_u$ denote the set of service and reconfiguration arcs associated with $u$. 
Let $\mathcal{E}_*$ denote the set of service arcs, $\mathcal{E}_\circ$ the set of storage arcs, $\mathcal{E}_\triangle$ the set of reconfiguration arcs, and $\mathcal{E}_\circ(s)$ the set of storage arcs at station $s$.
For any node $(s,t,k) \in \mathcal{N}$, let $\mathcal{E}^+(s,t,k)$ and $\mathcal{E}^-(s,t,k)$ denote the sets of arcs leaving and entering the node, respectively. For any subset $\mathcal{E}' \subseteq \mathcal{E}$, define $\mathcal{E}'^{\pm}(s,t,k)=\mathcal{E}^{\pm}(s,t,k)\cap \mathcal{E}'$.

\subsection{Flow Representation and Coupling}

This section introduces unit, passenger, and freight flows on the space--time--state network and describes how they are coupled to ensure consistent and feasible operations.

\subsubsection{Vehicle and Unit Flows}

For each station and time node on the upbound and downbound planes, we decide whether a service departs to the next station. If a departure is scheduled, its composition on that segment is determined by the outgoing service arcs, namely, (i) the number of units assigned to the passenger service arc and (ii) the activation of freight service arcs, where each activated freight arc indicates the inclusion of one freight-dedicated unit. Each service route is defined by a sequence of consecutive service arcs, and the associated unit flows describe how the vehicle composition evolves along the route.

Given these service decisions, unit flows describe how individual units move through the space--time--state network. They ensure that every activated service arc is supported by sufficient units and that flow is conserved at each node. Unit flows also track unit circulation across stations and time, thereby ensuring that all departures are feasible under a limited fleet.
At the initial time $\underline{t}$, each station $s \in S$ has a storage node representing the initial distribution of units. At the terminal time $\overline{t}$, storage nodes represent the final distribution. By enforcing identical initial and final distributions, we can obtain a periodic schedule over the planning horizon.

\begin{figure}[ht]
\centering
\includegraphics[width=0.6\linewidth]{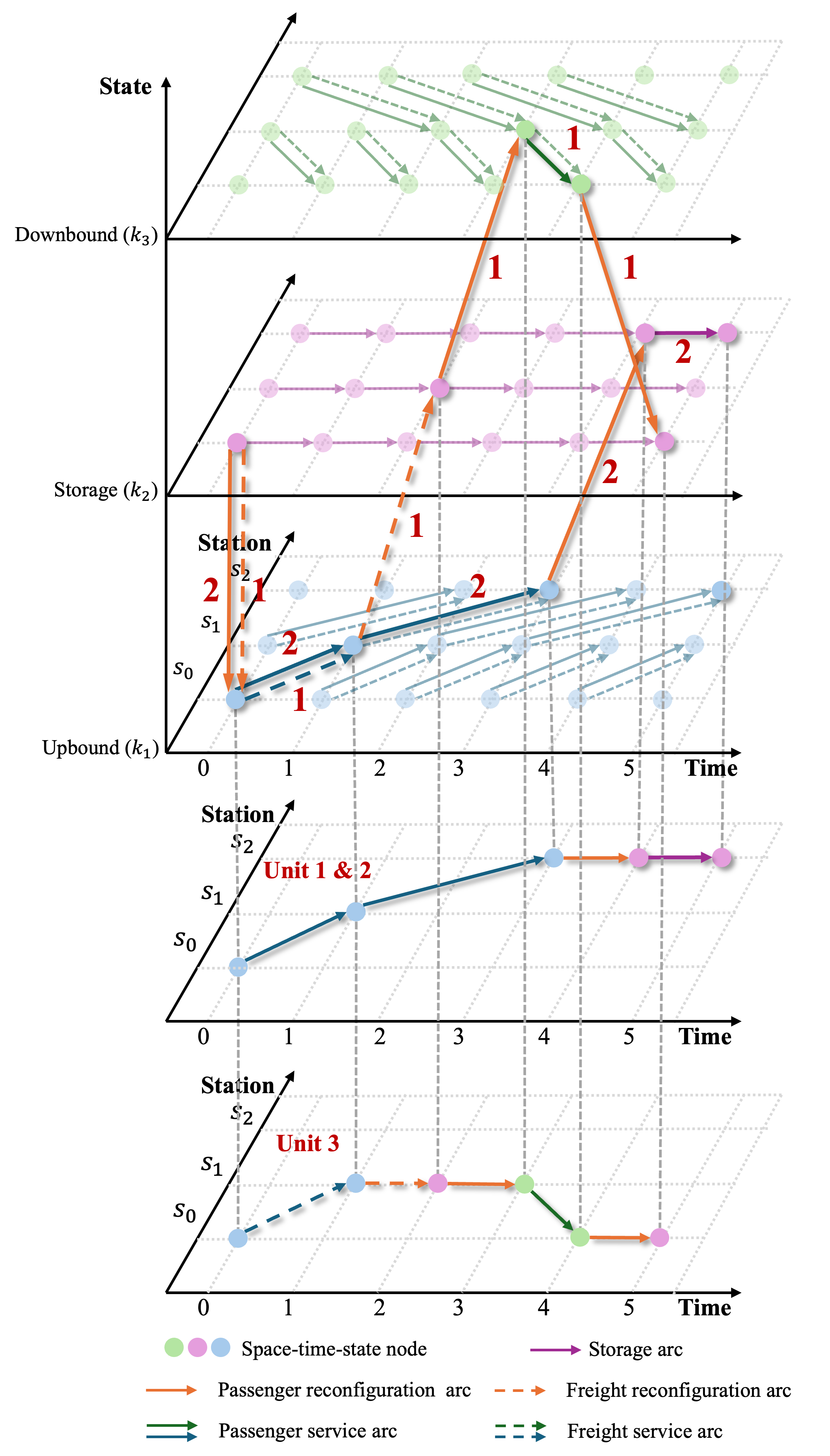}
\caption{Example of modular vehicle scheduling in proposed space-time-state network.}
\label{fig:units scheduling example}
\end{figure}

Figure~\ref{fig:units scheduling example} illustrates a modular vehicle schedule on the space--time--state network. On the upbound plane, a vehicle departs from station $s_0$ at time $t=0$ and travels toward $s_2$. Its composition on segment $(s_0,s_1)$ includes two passenger units and one freight unit, while on segment $(s_1,s_2)$ only the passenger units remain coupled. On the downbound plane, another vehicle departs from $s_1$ at time $t=3$ toward $s_0$ with a single passenger unit.
To implement this service plan, three units are scheduled. Units~1 and~2 depart from $s_0$ at $t=0$ as passenger units, travel with the upbound vehicle to $s_2$, and then remain idle. Unit~3 is loaded with freight at $s_0$, travels as a freight unit to $s_1$, unloads upon arrival, and is then reassigned as a passenger unit to serve the downbound vehicle to $s_0$.

\subsubsection{Passenger Flows}

Passenger assignment is modeled as a multi-commodity flow problem on the space--time--state network. Each demand \(p \in P\) is treated as an aggregate passenger-flow commodity, and the assigned flow must be supported by the available passenger capacity on the corresponding vehicle departures. 

To initialize and terminate flows, we introduce auxiliary nodes and arcs. For each demand \(p \in P\), a source node \(o'_p\) is connected to all feasible nodes \((o_p,t,k_p)\) with \(t \geq t_p\). Each such arc represents a possible boarding time, and its travel time accounts for passenger waiting. Similarly, a sink node \(\delta'_p\) is connected from all nodes \((\delta_p,t,k_p)\) with \(t \in [a_p,b_p]\) via zero-cost arcs. 

For each \(p \in P\), let \(\mathcal{E}_p\) denote the set of admissible arcs for assigning the flow of \(p\). This set includes all passenger service arcs on the state plane \(k_p\), as well as the auxiliary arcs incident to \(o'_p\) and \(\delta'_p\).

\subsubsection{Freight Flows}

Freight demand allocation is formulated as a multi-commodity flow problem on the space--time--state network, where each request $f \in F$ is treated as a distinct commodity. Freight is handled at the unit level and may be partially served, with any unmet quantity penalized in the objective. 

For each request \(f \in F\), the flow starts from the storage node \((o_f,a_f,k_2)\), where freight becomes available, and ends at \((\delta_f,b_f,k_2)\), ensuring arrival at destination \(\delta_f\) no later than \(b_f\). Freight reconfiguration arcs connect the storage state $k_2$ and the corresponding service state $k_f$. Loading occurs when or before a freight unit docks to a vehicle, and unloading occurs when or after the freight unit undocks at a station. Once freight is loaded onto a freight unit and the unit is docked to a vehicle, the freight flow follows the sequence of service arcs associated with that unit until the unit is undocked. If the unit is undocked at an intermediate station before the freight reaches its destination, the freight is unloaded to the station and can be transferred to a later service by being reloaded onto another unit.

For each $f \in F$, let $\mathcal{E}_f$ denote the set of admissible arcs for assigning freight flow. This set includes all storage arcs, all freight service arcs in direction $k_f$, and all reconfiguration arcs connecting the storage state $k_2$ with the corresponding service state.

\subsubsection{Flow Coupling}

Flow coupling links demand flows with unit flows through capacity constraints on the space--time--state network. Passenger flows are coupled at the vehicle level, whereas freight flows are coupled at the unit level.

For passenger flows, all coupled passenger units in a vehicle are treated as a single network entity. 
A passenger service arc therefore represents their aggregated capacity on that segment. 
Let $c_p$ denote the capacity of one passenger unit. 
The arc capacity is determined by the number of assigned passenger units, and the passenger flow on the arc cannot exceed this capacity.

For freight flows, each freight service arc and each reconfiguration arc corresponds to a specific freight-dedicated unit. Let $c_f$ denote the loading capacity of a single freight unit. The freight flow on each such arc must not exceed $c_f$, ensuring that loading, unloading, and transfer operations remain consistent with unit-based handling.
\section{Model Formulation}
\label{Model Formulation}

Based on the space--time--state network and flow definitions, we formulate the problem as a stochastic mixed-integer programming (MIP) model. 
The main modeling novelty lies in constructing an integrated service--unit--flow network structure that links the three decision layers shown in Figure~\ref{fig:decision overview}: vehicle service planning, unit scheduling, and demand assignment. 
This structure exposes the key couplings of the problem, including the link between service planning and unit-scheduling feasibility, and the interaction between passenger-priority service provision and freight use of modular capacity. 
These couplings motivate the decomposition-based exact and heuristic solution methods developed later. 
To handle stochastic passenger demand, we adopt a chance-constrained approach and reformulate the resulting model as a mixed-integer linear programming (MILP) model. For readability, the main notation used in the formulation is summarized in Appendix~\ref{apx:notation}.

\subsection{Stochastic MIP Formulation}

We introduce the following decision variables:

\begin{itemize}
    \item \textbf{Departure decisions.}
    Binary variables $x_{(s,t,k)} \in \{0,1\}$ for $(s,t,k) \in \mathcal{N}_1 \cup \mathcal{N}_3$ indicate whether a vehicle departure is scheduled at station $s$ and time $t$ in direction $k$.

    \item \textbf{Initial fleet allocation.}
    Non-negative integer variables $n_s \in \mathbb{Z}_{\ge 0}$ for $s \in S$ specify the number of units initially available at station $s$ at time $\underline{t}$. These variables represent the fleet required to support all scheduled operations over the planning horizon.

    \item \textbf{Vehicle configuration and unit scheduling.}
    Non-negative integer variables $y_e \in \mathbb{Z}_{\ge 0}$ for $e \in \mathcal{E}$ denote the number of units assigned to arc $e$, capturing vehicle composition, storage, and unit movements in the network.

    \item \textbf{Passenger flow assignment.}
    Non-negative continuous variables $z_e^p \ge 0$ for $p \in P$ and $e \in \mathcal{E}_p$ denote the amount of passenger demand $p$ routed through arc $e$.

    \item \textbf{Freight flow assignment.}
    Non-negative continuous variables $z_e^f \ge 0$ for $f \in F$ and $e \in \mathcal{E}_f$ denote the amount of freight demand $f$ routed through arc $e$.
\end{itemize}

The objective is to minimize the total system cost over the planning horizon:
\begin{equation}
    \min \quad 
    w_1 \sum_{s \in S} n_s
    + w_2 \sum_{p \in P} \sum_{e \in \mathcal{E}^{+}(o'_p)} \tau_e z_e^p 
    + w_3 \sum_{f \in F} \left(q_f - \sum_{e \in \mathcal{E}^{+}(o_f, a_f, k_2)} z_e^f \right).
\end{equation}

The first term represents fleet operating cost, where $n_s$ is the number of units initially deployed at station $s$, and $w_1$ is the cost per unit over the planning horizon. The second term represents passenger waiting cost, where waiting time is captured by the auxiliary arcs with time span $\tau_e$ and $w_2$ denotes the value of time. The third term penalizes unmet freight demand, where $q_f$ is the total demand of request $f$, the routed flow represents the served portion, and $w_3$ is the per-unit penalty cost.

\textbf{Service planning and unit scheduling constraints.}
Constraint~\eqref{c1} enforces a minimum headway $\underline{h}$ between consecutive departures at each station in each direction. Constraints~\eqref{c2} and~\eqref{c4} link the initial fleet allocation with unit dispatching at $\underline{t}$ and unit returning at $\bar{t}$, respectively, thereby imposing a periodic unit-scheduling condition at each station. Constraint~\eqref{c3} bounds the initial allocation at each station by its storage capacity $\overline{n}_s$. Constraint~\eqref{c5} enforces flow conservation at storage nodes for all intermediate times, maintaining the balance of units in the station pool.
Constraint~\eqref{c6} enforces unit-specific flow conservation at service nodes, ensuring that vehicle configuration is preserved along consecutive service arcs and can change only through docking or undocking operations. Constraint~\eqref{c7} restricts each freight-related arc to at most one unit. Constraint~\eqref{c8} links departure decisions with vehicle composition: if a departure is scheduled, the number of units in the departing vehicle must be between 1 and $l$; otherwise, no units are assigned. Under the standard convention that an empty summation equals zero, nodes without outgoing service arcs are automatically prevented from generating departures. Constraint~\eqref{c9} limits the number of stored units at each station by its storage capacity. Constraints~\eqref{c10}--\eqref{c12} define the variable domains.
\begin{align}
    &\sum_{t' \in T : \, t \le t' \le t+\underline{h}-1} x_{(s,t',k)} \leq 1, && \forall (s,t,k) \in \mathcal{N}_1 \cup \mathcal{N}_3 \label{c1} \\ 
    &\sum_{e \in \mathcal{E}^{+}(s,\underline{t},k_2)} y_e = n_s, && \forall s \in S, \label{c2} \\
    &\sum_{e \in \mathcal{E}^{-}(s,\overline{t},k_2)} y_e = n_s, && \forall s \in S, \label{c4} \\
    &n_s \le \overline{n}_s, && \forall s \in S,\label{c3}  \\
    &\sum_{e \in \mathcal{E}^{+}(s,t,k)} y_e = \sum_{e \in \mathcal{E}^{-}(s,t,k)} y_e, && \forall (s,t,k) \in \mathcal{N}_2, \; t \neq \underline{t}, \; t \neq \overline{t},  \label{c5} \\
    &\sum_{e \in \mathcal{E}_u^{+}(s,t,k)} y_e = \sum_{e \in \mathcal{E}_u^{-}(s,t,k)} y_e, && \forall (s,t,k) \in \mathcal{N}_1 \cup \mathcal{N}_3, u \in U, \label{c6} \\
    &y_e \le 1 && \forall e \in \mathcal{E}_u, u \in U_f, \label{c7} \\
    &x_{(s, t, k)} \le \sum_{e \in\mathcal{E}_*^{+}(s, t, k)} y_e \le l x_{(s, t, k)}, && \forall (s,t,k) \in \mathcal{N}_1 \cup \mathcal{N}_3,\label{c8} \\
    &y_e \le \overline{n}_s, && \forall s \in S, e \in \mathcal{E}_\circ(s),\label{c9} \\
    &x_{(s,t,k)} \in \{0,1\}, && \forall (s,t,k) \in \mathcal{N}_1 \cup \mathcal{N}_3, \label{c10} \\
    &n_s \in \mathbb{Z}_{\geq 0} && \forall s \in S, \label{c11} \\
    &y_e \in \mathbb{Z}_{\geq 0} && \forall e \in \mathcal{E}. \label{c12}
 \end{align}

\textbf{Passenger flow distribution constraints.}
Constraint~\eqref{c13} defines the passenger demand satisfaction requirement: for each group $p \in P$, the assigned flow leaving the origin and entering the destination must match the stochastic demand $\tilde{q}_p$. 
This stochastic constraint is reformulated later through a chance-constrained formulation.
Constraint~\eqref{c14} imposes flow conservation at all intermediate nodes in the passenger state plane. Constraint~\eqref{c15} enforces arc-capacity limits, requiring that the passenger flow on each passenger service arc does not exceed the capacity induced by the assigned passenger units. Constraint~\eqref{c17} defines the non-negativity of passenger flow variables.
\begin{align}
    &\sum_{e \in \mathcal{E}^{+}(o'_p)} z_e^p =  \sum_{e \in \mathcal{E}^{-}(\delta'_p)} z_e^p  = \tilde{q}_p, && \forall p \in P, \label{c13} \\
    &\sum_{e \in \mathcal{E}^{+}(s,t,k_p)} z_e^p = \sum_{e \in \mathcal{E}^{-}(s,t,k_p)} z_e^p, && \forall p \in P, (s,t,k_p) \in \mathcal{N}, \label{c14} \\
    &\sum_{p \in P} z_e^p \le c_p y_e, && \forall e \in \mathcal{E}_{u_0}, \label{c15} \\
    &z_e^p \ge 0, && \forall p \in P, e \in \mathcal{E}_p. \label{c17}
\end{align}

\textbf{Freight flow distribution constraints.}
Constraint~(\ref{c18}) limits served freight by requiring that, for each request $f \in F$, the total flow from the origin to the destination does not exceed the demand $q_f$, thereby allowing partial service when necessary. Constraint~(\ref{c19}) imposes flow conservation at storage-state nodes, ensuring that freight is neither created nor lost while waiting, transferring, or being reallocated at stations. Constraint~(\ref{c20}) enforces flow balance on the freight service plane, requiring that once freight is assigned to a freight unit $u \in U_f$, it stays with that unit until the unit is undocked. Constraint~(\ref{c21}) enforces capacity limits, requiring that the freight flow on each unit-specific arc does not exceed the unit capacity $c_f$ when the unit is assigned. Constraint~(\ref{c23}) defines the non-negativity of freight flow variables.
\begin{align} 
&\sum_{e \in \mathcal{E}^{+}(o_f, a_f, k_2)} z_e^f = \sum_{e \in \mathcal{E}^{-}(\delta_f, b_f, k_2)} z_e^f \le q_f, && \forall f \in F, \label{c18} \\ 
&\sum_{e \in \mathcal{E}^{+}(s,t,k)} z_e^f = \sum_{e \in \mathcal{E}^{-}(s,t,k)} z_e^f, && \forall f \in F, (s,t,k) \in \mathcal{N}_2, \label{c19} \\ 
&\sum_{e \in \mathcal{E}_u^{+}(s,t,k_f)} z_e^f = \sum_{e \in \mathcal{E}_u^{-}(s,t,k_f)} z_e^f, && \forall f \in F, (s,t,k_f) \in \mathcal{N}, u \in U_f, \label{c20} \\ 
&\sum_{f \in F} z_e^f \le c_f y_e, && \forall e \in \mathcal{E}_u, u \in U_f, \label{c21} \\ 
&z_e^f \ge 0, && \forall f \in F, e \in \mathcal{E}_f. \label{c23} 
\end{align}

Because passenger and freight flow variables are continuous, fractional flows may arise when aggregate demand is assigned across feasible services.
However, unnecessary splitting is discouraged by path-specific passenger waiting costs and restricted by service feasibility, unit capacities, time windows, transfer feasibility, and penalties for unserved freight demand.
This aggregate-flow treatment is consistent with tactical service network design and capacity planning, where the focus is on service planning and resource allocation rather than  dispatching decisions \citep{pei2021vehicle, zhang2025robust}.

\subsection{Model Reformulation}
\label{Model reFormulation}

In this system, passengers are assumed to travel directly without transfers. As a result, each passenger group can be restricted to a limited set of feasible paths, and each passenger service arc belongs to at most one path for a given demand. This property allows the arc-based passenger assignment formulation to be reformulated in a path-based form, which removes the flow balance constraints for intermediate nodes.

For each passenger group \(p \in P\), let \(R_p\) denote the set of feasible paths from \(o'_p\) to \(\delta'_p\), where each path satisfies the boarding-time and service-time-window requirements of group \(p\). Let \(z_r^p \ge 0\) denote the flow assigned to path \(r \in R_p\). The objective becomes
\begin{equation}
    \min \quad 
    w_1 \sum_{s \in S} n_s
    + w_2 \sum_{p \in P} \sum_{r \in R_p} \tau_r z_r^p
    + w_3 \sum_{f \in F} \left(q_f - \sum_{e \in \mathcal{E}^{+}(o_f,a_f,k_2)} z_e^f \right).
\end{equation}

Accordingly, the passenger assignment constraints~\eqref{c13}--\eqref{c17} are replaced by
\begin{align}
    &\sum_{r \in R_p} z_r^p = \tilde{q}_p, && \forall p \in P, \label{c25}  \\
    &\sum_{p \in P} \sum_{r \in R_p : e \in r} z_r^p \le c_p y_e, && \forall e \in \mathcal{E}_{u_0}, \label{c26} \\
    &z_r^p \ge 0, && \forall p \in P, r \in R_p. \label{c27} 
\end{align}

Freight flows, however, involve intermediate transfers and unit-level handling, which leads to a large set of feasible paths. We therefore retain the arc-based formulation for freight to keep the model tractable.

Passenger demand is stochastic and presumed to follow a normal distribution, 
\(\tilde{q}_p \sim \mathcal{N}(\mu_p,\sigma_p^2)\). 
We adopt a chance-constrained approach to ensure that uncertain passenger demand can be served with a prescribed confidence level. 

A deterministic decision variable \(\widehat{q}_p\) is introduced to represent the planned passenger demand assigned to feasible paths for passenger group \(p\). 
This quantity represents the reliability-adjusted service requirement implied by the chance constraint rather than the realized passenger demand in each scenario. 

We then rewrite constraint~\eqref{c25} as
\begin{equation}
    \sum_{r \in R_p} z_r^p = \widehat{q}_p, \qquad \forall p \in P. \label{c28} 
\end{equation}

To ensure that this assigned amount covers stochastic demand with confidence level \(1-\alpha_p\), we impose
\begin{equation}
    \mathbb{P}\big(\widehat{q}_p \ge \tilde{q}_p\big) \ge 1-\alpha_p, 
    \qquad \forall p \in P.
\end{equation}

Given the normality of $\tilde{q}_p$, this chance constraint is equivalent to the following deterministic linear inequalities:
\begin{align}
    & \widehat{q}_p \ge \mu_p + \Phi^{-1}(1-\alpha_p)\,\sigma_p, && \forall p \in P, \label{c29} \\
    & \widehat{q}_p \ge 0, && \forall p \in P, \label{c30} 
\end{align}
where $\Phi^{-1}(\cdot)$ denotes the inverse cumulative distribution function of the standard normal distribution. The normality assumption provides a closed-form demand quantile for the chance constraint, while the formulation only requires the corresponding \(1-\alpha_p\) quantile. 
Therefore, other distributional assumptions or empirical quantiles can be incorporated without changing the remaining model structure. 

\section{Solving Methodology}
\label{Solving Methodology}


The integrated service--unit--flow formulation leads to a large-scale MILP, making direct solution computationally challenging.
We therefore develop two solution methods that exploit different model structures.

To solve exactly, we use a Benders decomposition based on the planning--scheduling coupling. 
The unit-scheduling part is a key source of fractional solutions in the relaxation, but becomes tractable once the planning decisions are fixed. 
We therefore solve it through subproblems and further decompose it by station, allowing multiple cuts to be generated in each iteration. 
Problem-specific valid inequalities and a warm-start strategy are added to improve convergence.

For larger instances, we use a different decomposition logic based on passenger priority. 
The two-stage heuristic first builds a passenger-oriented service plan and then incorporates freight flows and unit operations.

Before presenting the solution methods, we first generate the passenger path sets used in the path-based formulation. 
In the path-based formulation, passenger flows are assigned to complete paths, requiring a feasible path set \(R_p\) for each passenger group \(p \in P\). 
Because paths are restricted by travel direction, time windows, and the no-transfer requirement, the search space is limited. 
We enumerate all simple paths from the auxiliary origin \(o'_p\) to the auxiliary destination \(\delta'_p\) using depth-first search on the acyclic space--time--state network. 
Full enumeration preserves equivalence with the original arc-based model and is tractable for the instances considered. 
For larger instances, restricted path sets or column generation can be used to reduce model size, with the latter preserving exactness.



\subsection{Benders Decomposition}

The model jointly determines service planning, flow assignment, and unit scheduling on a space--time--state network, which yields a tightly coupled formulation. To improve tractability, we apply Benders decomposition that separates high-level service and flow decisions from low-level unit scheduling decisions. 

We partition the arc set $\mathcal E$ into master arcs $\mathcal E_{\mathrm{mas}}$, containing service and reconfiguration arcs, and subproblem arcs $\mathcal E_{\mathrm{sub}}$, containing storage arcs. 
The master problem determines departure decisions, vehicle routes, timetables, vehicle compositions, reconfiguration decisions, and passenger and freight demand flows. 
The storage-arc decisions and station-level initial fleet variables are handled in the subproblems. 

Given a master solution, the induced service and reconfiguration flows create net unit imbalances at storage-state nodes. 
The subproblem checks whether these imbalances admit feasible storage flows and computes the required initial fleet. 
Depending on the subproblem outcome, feasibility or optimality cuts are added to the master problem, progressively tightening the master relaxation until convergence. 

\subsubsection{Benders Subproblems}

The integrated model couples service planning and flow assignment decisions with detailed unit scheduling. We exploit this structure through Benders decomposition, which separates the problem into a high-level master problem and a collection of independent station-level subproblems.

Given a master solution $\hat{y}$, the subproblem decomposes into $|S|$ independent components, one for each station $s \in S$. This decomposition holds because unit scheduling decisions become station-wise decoupled once the inter-station flows implied by the service plan are fixed. The master problem uses auxiliary variables $\bm{\theta} = (\theta_s)_{s \in S}$ to approximate the fleet operating cost at each station.

Master decisions on $\mathcal E_{\mathrm{mas}}$ induce exogenous net inflows at nodes in the subnetwork. For any node $v=(s,t,k)$, we define the net inflow induced by a master flow vector $y$ as
\begin{equation}
b_v(y)
=
\sum_{e\in \mathcal E^{-}(v)\cap \mathcal E_{\mathrm{mas}}} y_e
-
\sum_{e\in \mathcal E^{+}(v)\cap \mathcal E_{\mathrm{mas}}} y_e .
\label{eq:bv_def}
\end{equation}
Thus, $b_v(y)$ can be positive or negative: positive values indicate units entering the storage subnetwork, while negative values indicate units required by the master arcs.

For each station $s \in S$, the subproblem $Q_s(\hat y)$ minimizes the local fleet size subject to flow conservation and capacity constraints at station $s$:
\begin{align}
Q_s(\hat y) = \min\quad 
& w_1 n_s \label{sp:obj_s}\\
\text{s.t.}\quad
& \sum_{e \in \mathcal E^{+}(s,\underline t,k_2)\cap\mathcal E_{\mathrm{sub}}} y_e - n_s = b_{(s,\underline t,k_2)}(\hat y), \label{sp:src_s}\\
& \sum_{e \in \mathcal E^{-}(s,\overline t,k_2)\cap\mathcal E_{\mathrm{sub}}} y_e - n_s = - b_{(s,\overline t,k_2)}(\hat y), \label{sp:sink_s}\\
& \sum_{e \in \mathcal E^{+}(s,t,k_2)\cap\mathcal E_{\mathrm{sub}}} y_e - \sum_{e \in \mathcal E^{-}(s,t,k_2)\cap\mathcal E_{\mathrm{sub}}} y_e = b_{(s,t,k_2)}(\hat y), && \forall t \in T \setminus \{\underline{t}, \overline{t}\}, \label{sp:bal_s}\\
& y_e \le \overline n_s, && \forall e \in \mathcal{E}_\circ(s), \label{sp:up_ye}\\
& n_s \le \overline n_s, \label{sp:up_ns}\\
& y_e \ge 0, && \forall e \in \mathcal{E}_\circ(s), \\
& n_s \ge 0.
\end{align}

Constraints~\eqref{sp:src_s}, \eqref{sp:sink_s}, and \eqref{sp:bal_s} are the station-level counterparts of Constraints~\eqref{c2}, \eqref{c4}, and \eqref{c5}, respectively. In the subproblem, we relax the integrality constraints on $y_e$ and $n_s$. 
Given an integer master solution $\hat y$, all induced node imbalances are integral. The resulting subproblem is a minimum-cost flow problem with a totally unimodular constraint matrix, so its linear relaxation admits an optimal integer solution whenever it is feasible. 

\subsubsection{Benders Cuts}

For each station $s$, we associate dual variables $\pi_s, \rho_s, \lambda_{(s,t)}, \xi_{(s,e)}$ and $\beta_s$ with constraints \eqref{sp:src_s}--\eqref{sp:up_ns}, respectively. We define the station-specific dual profit function as
\begin{equation}
\begin{aligned}
\Phi_s(y; \pi_s, \rho_s, \lambda_{(s,t)}, \xi_{(s,e)}, \beta_s)
:=\;& \pi_s b_{(s,\underline t,k_2)}(y)
- \rho_s b_{(s,\overline t,k_2)}(y) + \sum_{t \in T \setminus \{\underline{t}, \overline{t}\}} \lambda_{(s,t)} b_{(s,t,k_2)}(y) \\
&+ \overline n_s \sum_{e \in \mathcal E_\circ(s)} \xi_{(s,e)}
+ \overline n_s \beta_s .
\end{aligned}
\end{equation}

\paragraph{Optimality Cuts.} If subproblem $s$ is feasible, let $(\pi_s^{(i)}, \rho_s^{(i)}, \lambda_{(s,t)}^{(i)},\xi_{(s,e)}^{(i)}, \beta_s^{(i)})$ be an optimal dual solution at iteration $i$. We generate a cut:
\begin{equation}
\theta_s \ge \Phi_s(y; \pi_s^{(i)}, \rho_s^{(i)}, \lambda_{(s,t)}^{(i)},\xi_{(s,e)}^{(i)},\beta_s^{(i)}) \label{cut:opt}
\end{equation}

\paragraph{Feasibility Cuts.} If subproblem $s$ is infeasible, let $(\bar\pi_s^{(i)}, \bar\rho_s^{(i)}, \bar\lambda_{(s,t)}^{(i)}, \bar\xi_{(s,e)}^{(i)}, \bar\beta_s^{(i)})$ be a Farkas ray at iteration $i$. We generate a cut:
\begin{equation}
0 \ge \Phi_s(y; \bar\pi_s^{(i)}, \bar\rho_s^{(i)}, \bar\lambda_{(s,t)}^{(i)},\bar\xi_{(s,e)}^{(i)}, \bar\beta_s^{(i)}) \label{cut:fea}
\end{equation}

\subsubsection{Benders Master Problem}
Let $\mathcal{I}_s^{\mathrm{opt}}$ and $\mathcal{I}_s^{\mathrm{fea}}$ denote the sets of optimality and feasibility cuts generated for station $s$, respectively. 
At each iteration, we solve a restricted master problem (RMP) that includes the master decision variables and all Benders cuts accumulated up to the current iteration. 
Here, \(\theta_s\) represents the RMP approximation of the optimal fleet cost returned by the station-level unit-scheduling subproblem for station \(s\). 
The RMP is formulated as:
\begin{align}
\min\quad
& \sum_{s \in S} \theta_s + w_2 \sum_{p \in P} \sum_{r \in R_p} \tau_r z_r^p
+ w_3 \sum_{f \in F} \left(q_f - \sum_{e \in \mathcal{E}^{+}(o_f,a_f,k_2)} z_e^f\right) \\
\text{s.t.}\quad
& y_e \in \mathbb Z_{\ge 0},
&& \forall e\in\mathcal E_{\mathrm{mas}}, \label{mp:y-domain}\\
& \theta_s \ge 0, && \forall s \in S \\
& \theta_s \ge \Phi_s(y; \pi_s^{(i)}, \rho_s^{(i)}, \lambda_{(s,t)}^{(i)},\xi_{(s,e)}^{(i)},\beta_s^{(i)}), && \forall s \in S, i \in \mathcal{I}_s^{\mathrm{opt}} \\
& 0 \ge \Phi_s(y; \bar\pi_s^{(i)}, \bar\rho_s^{(i)}, \bar\lambda_{(s,t)}^{(i)},\bar\xi_{(s,e)}^{(i)}, \bar\beta_s^{(i)}), &&  \forall s \in S, i \in \mathcal{I}_s^{\mathrm{fea}} \\
& (\ref{c1}),(\ref{c6})-(\ref{c8}),(\ref{c10}),(\ref{c18})-(\ref{c23}),(\ref{c26})-(\ref{c28}),(\ref{c29})-(\ref{c30}).
\nonumber
\end{align}

Unlike the standard single-cut approach, which treats unit scheduling as one subproblem and generates at most one cut per iteration, the multi-cut formulation decomposes the subproblem by station and generates cuts in parallel.
This provides more localized approximations of the subproblem value function and improves convergence in practice.

\subsubsection{Valid Inequalities}
\label{sec:valid_ineq}
To strengthen the restricted master problem, we exploit a station-wise balance property implied by the closed and periodic unit-scheduling structure. 
\begin{proposition}
    Any master solution $\hat y$ that admits a feasible unit schedule must satisfy
\begin{equation}
\sum_{t \in T} b_{(s,t,k_2)}(\hat y) = 0, \quad \forall s \in S. \label{eq:station_balance}
\end{equation}
\end{proposition}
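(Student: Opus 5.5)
The plan is to derive \eqref{eq:station_balance} directly from the station-level subproblem $Q_s(\hat y)$, since a feasible unit schedule for $\hat y$ is exactly a feasible solution $(y_e)_{e\in\mathcal E_\circ(s)}, n_s$ of constraints \eqref{sp:src_s}--\eqref{sp:bal_s} for every $s\in S$. These are the station-wise restrictions of \eqref{c2}, \eqref{c4}, \eqref{c5}, with the master-arc contributions moved to the right-hand side through the definition \eqref{eq:bv_def} of $b_v$. So I fix a station $s$ and a feasible storage flow, and sum the balance equations over all $t\in T$.

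First, I rewrite the three constraint types uniformly as $\mathrm{out}_t-\mathrm{in}_t+\epsilon_t = b_{(s,t,k_2)}(\hat y)$. Here $\mathrm{out}_t$ and $\mathrm{in}_t$ are the storage-arc flows leaving and entering $(s,t,k_2)$. The correction is $\epsilon_{\underline t}=-n_s$, $\epsilon_{\overline t}=+n_s$ (moving \eqref{sp:sink_s} to the form $n_s-\mathrm{in}_{\overline t}=b_{(s,\overline t,k_2)}$), and $\epsilon_t=0$ otherwise. To put \eqref{sp:src_s} and \eqref{sp:sink_s} in this form, I use that no storage arc enters $(s,\underline t,k_2)$ and none leaves $(s,\overline t,k_2)$. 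This holds because storage arcs only join $(s,t,k_2)$ to $(s,t+1,k_2)$ within $T$. Summing over $t\in T$, the two $n_s$ terms cancel. Every storage arc in $\mathcal E_\circ(s)$ has both endpoints in $\{(s,t,k_2):t\in T\}$, so its flow appears exactly once with $+$ (at its tail) and once with $-$ (at its head). The left-hand side therefore telescopes to zero, which gives $\sum_{t\in T}b_{(s,t,k_2)}(\hat y)=0$.

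The step I expect to need the most care is the bookkeeping at the horizon boundaries. I must confirm that every arc incident to storage nodes at station $s$ that is not a storage arc of station $s$ is classified in $\mathcal E_{\mathrm{mas}}$, and hence already counted in $b_v$. This includes the auxiliary boundary connections for docking starting before $\underline t$ or undocking ending after $\overline t$, which are reconfiguration arcs. I must also confirm that storage arcs never cross stations. Both follow from the network construction: storage arcs are intra-station and consecutive in time, and all other arcs incident to $\mathcal N_2$ are reconfiguration arcs.

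An equivalent interpretation, useful as a sanity check, is that $-\sum_t b_{(s,t,k_2)}(\hat y)$ is the net number of units docked minus undocked at $s$ over the horizon. Periodicity \eqref{c2}--\eqref{c4} forces this quantity to vanish.
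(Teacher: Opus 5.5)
Your proposal is correct and is essentially the paper's argument: you combine the source, sink and intermediate balance constraints of $Q_s(\hat y)$ so that the two $n_s$ terms cancel and the intra-station storage-arc flows telescope to zero, which gives $\sum_{t\in T} b_{(s,t,k_2)}(\hat y)=0$. Your explicit check that no storage arc enters $(s,\underline t,k_2)$ or leaves $(s,\overline t,k_2)$ spells out a step the paper uses only implicitly when it writes the combined left-hand side as a full sum over $t\in T$.
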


\begin{proof}
Fix a station $s \in S$ and consider any master solution $\hat y$ for which the station-level subproblem is feasible. Subtracting the sink-balance constraint \eqref{sp:sink_s} from the source-balance constraint \eqref{sp:src_s} and summing the flow-conservation constraints \eqref{sp:bal_s} over all intermediate time layers $t \in T \setminus \{\underline{t}, \overline{t}\}$ yields
\begin{equation}
\sum_{t \in T} \left( \sum_{e \in \mathcal{E}^{+}(s,t,k_2)\cap \mathcal{E}_{\mathrm{sub}}} y_e
- \sum_{e \in \mathcal{E}^{-}(s,t,k_2)\cap \mathcal{E}_{\mathrm{sub}}} y_e \right)
= \sum_{t \in T} b_{(s,t,k_2)}(\hat y).
\end{equation}
The left-hand side is a telescoping sum over the subproblem arcs. Because every subproblem arc is internal to station $s$, each such arc appears exactly once with a positive sign (as outgoing) and once with a negative sign (as incoming), so the sum equals zero. Therefore, \(\sum_{t \in T} b_{(s,t,k_2)}(\hat y)=0\).
\end{proof}
We add constraints \eqref{eq:station_balance} to the master problem to enforce station-wise balance of the net unit flow, thereby screening out infeasible master solutions.

\subsubsection{Warming Up}

Before solving the original integer master problem, we perform a warm-up phase in which we solve its linear relaxation to obtain a tight convex approximation of the feasible region. Candidate solutions from the relaxed master problem are evaluated by the subproblems, which iteratively generate feasibility and optimality Benders cuts. To prevent the master problem from becoming computationally burdened by an excessive number of constraints, we employ a dynamic cut-pruning mechanism: every ten iterations and upon termination, we remove all non-binding cuts.
The warm-up phase terminates when the linear relaxation is solved to optimality or when a time limit of 5 minutes is reached.

\subsection{A Tailored Two-Stage Decomposition Heuristic}

The Benders decomposition improves solvability, but the model still converges slowly for large-scale instances. 
To address this, we develop a tailored two-stage decomposition heuristic. 
Since passenger services are given higher priority and largely determine the basic service pattern, 
we first identify a passenger-oriented service backbone and then optimize freight operations and detailed unit circulation on top of it.
The key idea is not to fully separate passenger and freight planning, but to preserve the main passenger service structure while postponing the more flexible freight-related decisions.
This reduces the search space while keeping the heuristic close to the integrated problem.

\subsubsection{Stage 1: Passenger-Oriented Service Backbone}
In the first stage, we solve a reduced model that determines the passenger services to be operated and the corresponding passenger demand assignment based only on passenger demand.
The reduced model still accounts for fleet usage and unit-scheduling feasibility, preventing the passenger solution from requiring excessive or infeasible unit operations.
Thus, the Stage-1 solution provides a resource-aware passenger backbone rather than a purely demand-driven passenger timetable. 
The resulting model is: 
\begin{align}
\min \quad 
& w_1 \sum_{s \in S} n_s
+ w_2 \sum_{p \in P} \sum_{r \in R_p} \tau_r z_r^p \label{obj} \\
\text{s.t.} \quad
& y_e = 0,  &&\forall e \in \mathcal{E}_u,\; u \in U_f, \\
& (\ref{c1})-(\ref{c12}),\; (\ref{c26})-(\ref{c28}),\; (\ref{c29})-(\ref{c30}). \nonumber
\end{align}
Let the resulting passenger-related unit flows be denoted by $y_e = \overline{y}_e$ for all $e \in \mathcal{E}_{u_0}$. 
These flows define the passenger service pattern passed to the second stage. 

\subsubsection{Stage 2: Freight Planning and Unit Scheduling}

In the second stage, the passenger service pattern obtained in Stage~1 is preserved. On this basis, freight demand is introduced, and the remaining decisions are optimized jointly.
In particular, Stage~2 inserts freight services into the residual capacity left by the passenger solution while re-optimizing unit circulation and fleet allocation globally, so that both passenger and freight operations remain feasible under shared fleet limitations.
The resulting model is: 
\begin{align}
\min \quad 
& w_1 \sum_{s \in S} n_s
+ w_3 \sum_{f \in F} \left(q_f - \sum_{e \in \mathcal{E}^{+}(o_f,a_f,k_2)} z_e^f \right) \label{obj_stage2} \\
\text{s.t.}\quad
& y_e = \overline{y}_e, && \forall e \in \mathcal{E}_{u_0}, \\
& (\ref{c1})-(\ref{c12}),\; (\ref{c18})-(\ref{c23}).\nonumber
\end{align}

Since unmet freight demand is allowed with a penalty cost, the passenger solution obtained in Stage~1 always remains feasible in Stage~2.
The resulting Stage~2 solution therefore provides a complete feasible solution to the original integrated problem.
Passenger flow assignment is fixed in Stage~1, so the corresponding passenger waiting cost is constant in Stage~2 and omitted from the objective.
The final objective value combines the passenger waiting cost from Stage~1 with the fleet and freight costs from Stage~2.

\section{Experimental Design}
\label{sec:exper design}
This section describes the construction of the experimental instances and parameter settings used in the numerical study. These settings provide the basis for the computational analyses that follow.

\subsection{Instance Description}
We consider two corridor configurations derived from the public transport network of Gothenburg, the second largest city in Sweden, reflecting two typical operational settings: a hub-based corridor and an urban corridor.
Figure~\ref{fig:corridors} illustrates these two corridor settings used in the experiments.

\begin{figure}[ht]
    \centering
    \begin{subfigure}[b]{0.48\textwidth}
        \centering
        \includegraphics[width=0.85\textwidth]{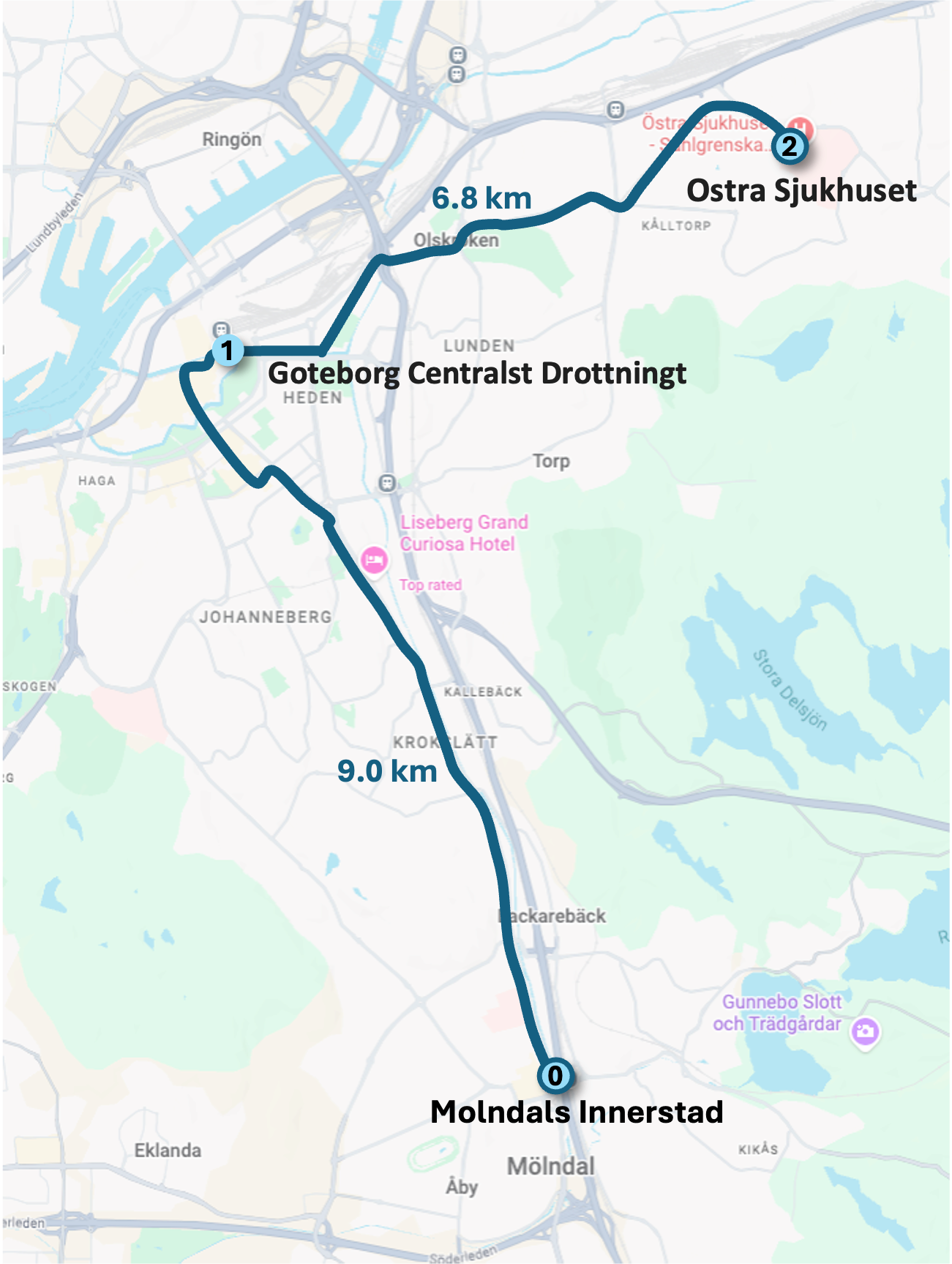}
        \caption{Hub-based corridor}
        \label{fig:hub}
    \end{subfigure}
    \hfill
    \begin{subfigure}[b]{0.48\textwidth}
        \centering
        \includegraphics[width=\textwidth]{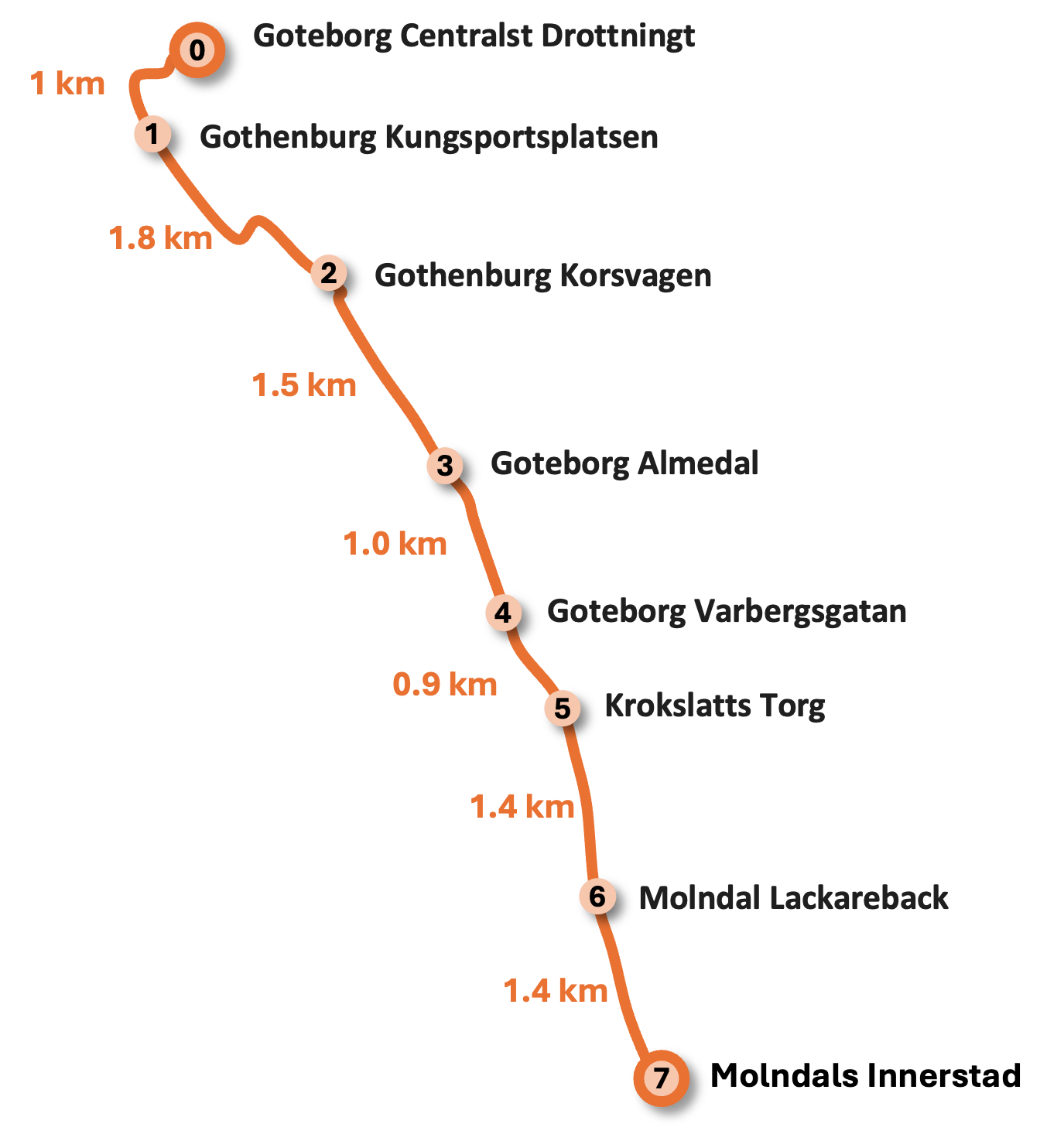}
        \caption{Urban corridor}
        \label{fig:urban}
    \end{subfigure}
    \caption{Corridor settings used in the experiments.}
    \label{fig:corridors}
\end{figure}

The hub-based corridor represents long-distance or hub-oriented transit services with limited intermediate stops.
It is modeled as a three-station corridor connecting Molndal, Goteborg Centralstation, and Ostra Sjukhuset. Molndal and Goteborg Centralstation serve as key terminal and interchange hubs, while Ostra Sjukhuset is a major destination node on the eastern side of the city.
Since the end-to-end travel time exceeds 30 minutes, we consider planning horizons of 1, 2, and 3 hours.

The urban corridor represents dense intra-city operations with frequent stops.
It is constructed by focusing on the Mölndal--Göteborg Centralstation segment and selecting six additional representative stations.
Because the end-to-end travel time is within 30 minutes, we examine shorter planning horizons of 0.5, 1, 1.5, 2, 2.5, and 3 hours.

Table~\ref{tab:instances} summarizes all test instances, including the corridor type, planning horizon, and the number of passenger and freight demand groups. For all test instances, the start of the planning horizon is set to zero, i.e., $\underline{t}=0$.

\begin{table}[ht]
\centering
\caption{Test instances and demand settings}
\label{tab:instances}
\begin{tabular}{ccccc}
\hline
Corridor & Instance & Horizon (min) & \#Passenger group & \#Freight group \\
\hline
\multirow{3}{*}{Hub-based} 
 & ins\_1  & [0, 60]  & 30  & 6 \\
 & ins\_2 & [0, 120] & 100 & 20 \\
 & ins\_3 & [0, 180] & 150 & 50 \\
\hline
\multirow{6}{*}{Urban} 
 & ins\_4  & [0, 30]  & 30  & 10 \\
 & ins\_5  & [0, 60]  & 70  & 15 \\
 & ins\_6  & [0, 90]  & 140 & 30 \\
 & ins\_7 & [0, 120] & 180 & 45 \\
 & ins\_8 & [0, 150] & 220 & 60 \\
 & ins\_9 & [0, 180] & 280 & 75 \\
\hline
\end{tabular}
\end{table}

Within each instance, passenger and freight demand groups are designed to be consistent with the baseline timetable under the prescribed headway settings and estimated segment loads.
Each group is characterized by an origin, a destination, a demand quantity, a service time window, and a travel direction. 
Passenger groups have specific arrival times, uncertain quantities, and relatively tight arrival time windows, reflecting the stochastic and time-sensitive nature of passenger demand.
Freight groups have deterministic quantities and wider service time windows, reflecting their greater scheduling flexibility.
Table~\ref{tab:sample_demand} provides illustrative examples of the passenger and freight demands for instance ins\_1.

\begin{table}[ht]
\centering
\caption{Illustrative passenger and freight demands for instance ins\_1}
\label{tab:sample_demand}
\begin{tabular}{ccccccccc}
\hline
Type & Group & $o$ & $d$ & Quantity & Arrival time & Left TW & Right TW & State \\
\hline
Passenger & 1 & 0 & 1 & $\mathcal{N}(6,1^2)$  & 0  & 6  & 36 & up   \\
Passenger & 2 & 1 & 2 & $\mathcal{N}(9,1^2)$  & 21 & 23 & 53 & up   \\
Passenger & 3 & 0 & 2 & $\mathcal{N}(7,1^2)$  & 10 & 33 & 60 & up   \\
Passenger & 4 & 2 & 1 & $\mathcal{N}(11,2^2)$ & 16 & 22 & 52 & down \\
Passenger & 5 & 1 & 0 & $\mathcal{N}(10,2^2)$ & 17 & 23 & 53 & down \\
\hline
Freight   & 1 & 0 & 1 & 20 & -- & 0  & 30 & up   \\
Freight   & 2 & 0 & 2 & 39 & -- & 0  & 60 & up   \\
Freight   & 3 & 1 & 0 & 4  & -- & 30 & 60 & down \\
Freight   & 4 & 2 & 1 & 13 & -- & 0  & 30 & down \\
Freight   & 5 & 2 & 0 & 17 & -- & 0  & 60 & down \\
\hline
\end{tabular}
\end{table}

In the subsequent computational experiments, both deterministic and stochastic passenger-demand settings are considered.
For a passenger group $p$ whose demand follows $\mathcal{N}(\mu_p,\sigma_p^2)$, the deterministic setting uses $\mu_p$ as the group demand, whereas the stochastic setting uses the demand quantile implied by the chance constraint in Section~\ref{Model reFormulation}.

\subsection{Parameter Settings}

The operating speed of modular vehicles is set to 0.5~km/min (30~km/h), which is consistent with the speeds commonly used for urban bus priority lanes and dedicated-corridor services. The passenger time coefficient is set to 3~kr/min to reflect waiting and delay costs. This value is broadly consistent with Swedish appraisal practice, where delays in public transport are valued more highly than normal in-vehicle time. The operating cost per unit is set to 500~kr/h as an aggregate coefficient for urban bus operations.

The remaining parameters are treated as scenario settings that reflect the assumed modular-vehicle design and corridor operating conditions. Each unit can carry up to 15 passengers or 30 boxes of freight. The maximum vehicle length is limited to five units. Storage capacity at intermediate stations is set to five units, and no storage limit is imposed at terminal stations. Due to differences in stop frequency and operating conditions, the maximum number of freight-carrying units per vehicle is set to two in the hub-based corridor and one in the urban corridor. The docking and undocking time for each unit is set to 1~min. The minimum headway between two consecutive departures from the same station is set to 8~min.





\section{Computational Performance}
\label{Computational Performance}

All algorithms are implemented in Java and solved with IBM ILOG CPLEX 22.1.1. The experiments are conducted on a MacBook Pro equipped with an Apple M4 Pro chip and 24~GB of memory. Each result is based on five independent runs to reduce the impact of randomness in the solution process. The time limit for each run is 30~min.

The arc-based formulation is solved directly by CPLEX and used as a benchmark. Benders decomposition is implemented using a generic callback that generates and adds cuts at candidate solutions during the branch-and-bound process.

For the decomposition heuristic, we use different time limits for the two stages. The Stage 1 model is solved with a time limit of 25~min, and the Stage 2 model is limited to 5~min. This setting is based on preliminary tests, which indicate that the Stage 2 problem can be solved efficiently once the passenger-related structure is fixed.

We evaluate the proposed methods on the two corridor settings described in Section~\ref{sec:exper design} under both deterministic (\textit{det}) and stochastic (\textit{sto}) passenger demand scenarios. 
For the stochastic setting, the chance-constraint confidence level is set to $1-\alpha_p=0.9$. 
Tables~\ref{tab:small_results} and~\ref{tab:large_results} summarize the results for the hub-based and urban corridor instances, respectively. 
We compare three approaches: the direct arc-based formulation solved by CPLEX (\textit{Solver}), the proposed Benders decomposition (\textit{Benders}), and the two-stage decomposition heuristic (\textit{Heuristic}).
The reported metrics include computation time ($t$, in seconds), objective value (Obj, in kr), the optimality gap when applicable ($g_{\text{opt}}$, in \%), and the relative gap to the solver incumbent ($g_{\text{rel}}$, in \%). 
A negative $g_{\text{rel}}$ indicates that Benders or the heuristic obtained a better solution than the direct solver within the same time limit.

\begin{table}[ht]
\centering
\renewcommand{\arraystretch}{1.2}
\caption{Computational results on hub-based corridor instances}
\label{tab:small_results}
\resizebox{\textwidth}{!}{
\begin{tabular}{ll
r r r
r r r r
r r r}
\hline
 &  & \multicolumn{3}{c}{Solver} & \multicolumn{4}{c}{Benders} & \multicolumn{3}{c}{Heuristic} \\
\cline{3-5} \cline{6-9} \cline{10-12}
Inst & Sce
& \begin{tabular}[c]{@{}c@{}}$t$\\(s)\end{tabular} 
& \begin{tabular}[c]{@{}c@{}}Obj\\(kr)\end{tabular} 
& \begin{tabular}[c]{@{}c@{}}$g_{\text{opt}}$\\(\%)\end{tabular}
& \begin{tabular}[c]{@{}c@{}}$t$\\(s)\end{tabular} 
& \begin{tabular}[c]{@{}c@{}}Obj\\(kr)\end{tabular} 
& \begin{tabular}[c]{@{}c@{}}$g_{\text{opt}}$\\(\%)\end{tabular} 
& \begin{tabular}[c]{@{}c@{}}$g_{\text{rel}}$\\(\%)\end{tabular}
& \begin{tabular}[c]{@{}c@{}}$t$\\(s)\end{tabular} 
& \begin{tabular}[c]{@{}c@{}}Obj\\(kr)\end{tabular} 
& \begin{tabular}[c]{@{}c@{}}$g_{\text{rel}}$\\(\%)\end{tabular} \\
\hline

\multirow{2}{*}{ins\_1}
& det & 4    & \textbf{9.37e3} & 0.00 
& 2    & \textbf{9.37e3} & 0.00 & 0.00 
& 1    & 1.04e4 & 10.67 \\
& sto & 5    & \textbf{9.80e3} & 0.00 
& 3    & \textbf{9.80e3} & 0.00 & 0.00 
& 2    & 1.39e4 & 30.12 \\
\hline

\multirow{2}{*}{ins\_2}
& det & 1800 & 2.90e4 & 9.73 
& 1800 & \textbf{2.77e4} & 4.75 & -4.37 
& 23   & 2.90e4 & 0.07 \\
& sto & 1800 & 3.37e4 & 10.53 
& 1800 & \textbf{3.23e4} & 6.33 & -4.01 
& 61   & 3.35e4 & -0.69 \\
\hline

\multirow{2}{*}{ins\_3}
& det & 1800 & 5.03e4 & 19.32 
& 1800 & 4.75e4 & 13.78 & -5.59 
& 1501 & \textbf{4.69e4} & -6.69 \\
& sto & 1800 & 5.91e4 & 22.45 
& 1800 & 5.52e4 & 15.93 & -6.63 
& 1501 & \textbf{5.39e4} & -8.78 \\
\hline

\end{tabular}
}
\end{table}

\begin{table}[ht]
\centering
\renewcommand{\arraystretch}{1.2}
\caption{Computational results on urban corridor instances}
\label{tab:large_results}
\resizebox{\textwidth}{!}{
\begin{tabular}{llrrr rrrr rrr}
\hline
 &  & \multicolumn{3}{c}{Solver} & \multicolumn{4}{c}{Benders} & \multicolumn{3}{c}{Heuristic} \\
\cline{3-5} \cline{6-9} \cline{10-12}
Inst & Sce
& \begin{tabular}[c]{@{}c@{}}$t$\\(s)\end{tabular} 
& \begin{tabular}[c]{@{}c@{}}Obj\\(kr)\end{tabular} 
& \begin{tabular}[c]{@{}c@{}}$g_{\text{opt}}$\\(\%)\end{tabular}
& \begin{tabular}[c]{@{}c@{}}$t$\\(s)\end{tabular} 
& \begin{tabular}[c]{@{}c@{}}Obj\\(kr)\end{tabular} 
& \begin{tabular}[c]{@{}c@{}}$g_{\text{opt}}$\\(\%)\end{tabular} 
& \begin{tabular}[c]{@{}c@{}}$g_{\text{rel}}$\\(\%)\end{tabular}
& \begin{tabular}[c]{@{}c@{}}$t$\\(s)\end{tabular} 
& \begin{tabular}[c]{@{}c@{}}Obj\\(kr)\end{tabular} 
& \begin{tabular}[c]{@{}c@{}}$g_{\text{rel}}$\\(\%)\end{tabular} \\
\hline

\multirow{2}{*}{ins\_4}
& det & 15 & \textbf{5.13e3} & 0.00 
& 3 & \textbf{5.13e3} & 0.00 & 0.00 
& 1 & 5.90e3 & 14.98 \\
& sto & 11 & \textbf{5.73e3} & 0.00 
& 6 & \textbf{5.73e3} & 0.00 & 0.00 
& 1 & 6.46e3 & 12.69 \\
\hline

\multirow{2}{*}{ins\_5}
& det & 1800 & 1.49e4 & 3.04 
& 1800 & \textbf{1.49e4} & 1.10 & -0.50 
& 22 & 1.76e4 & 17.89 \\
& sto & 1800 & 1.81e4 & 7.01 
& 1800 & \textbf{1.76e4} & 1.63 & -2.49 
& 90 & 2.15e4 & 18.71 \\
\hline

\multirow{2}{*}{ins\_6}
& det & 1800 & 2.86e4 & 12.63 
& 1800 & \textbf{2.68e4} & 5.76 & -6.44 
& 613 & 2.95e4 & 3.00 \\
& sto & 1800 & 3.44e4 & 17.19 
& 1800 & \textbf{3.21e4} & 9.24 & -6.84 
& 1517 & 3.47e4 & 0.70 \\
\hline

\multirow{2}{*}{ins\_7}
& det & 1800 & 4.30e4 & 19.86 
& 1800 & 4.06e4 & 11.71 & -5.78 
& 1553 & \textbf{4.02e4} & -6.57 \\
& sto & 1800 & 5.48e4 & 28.51 
& 1800 & 5.04e4 & 17.88 & -7.90 
& 1506 & \textbf{4.76e4} & -13.13 \\
\hline

\multirow{2}{*}{ins\_8}
& det & 1800 & 5.71e4 & 27.93 
& 1800 & 5.03e4 & 15.47 & -11.85 
& 1604 & \textbf{4.88e4} & -14.52 \\
& sto & 1800 & 1.34e5 & 63.83 
& 1800 & 6.84e4 & 24.44 & -49.08 
& 1511 & \textbf{6.18e4} & -53.98 \\
\hline

\multirow{2}{*}{ins\_9}
& det & 1800 & 1.78e5 & 73.46 
& 1800 & 7.40e4 & 30.52 & -58.50 
& 1800 & \textbf{6.15e4} & -65.51 \\
& sto & 1800 & 2.68e5 & 79.12 
& 1800 & 8.69e4 & 31.88 & -67.54 
& 1681 & \textbf{7.24e4} & -72.95 \\
\hline

\end{tabular}
}
\end{table}

For the hub-based instances (Table~\ref{tab:small_results}), both exact approaches solve the smallest instance ins\_1 to optimality, with Benders being faster.
As the instance size increases, both exact methods hit the time limit; however, Benders consistently returns better objective values and smaller optimality gaps than the solver.

Convergence in exact approaches becomes difficult for larger cases due to highly fractional and large relaxation gaps induced by the integrated network-flow structure.
In this regime, the heuristic becomes more competitive.
For ins\_3, it achieves the best objective value among all methods in both demand scenarios, while requiring shorter solution times. A similar pattern is observed for the urban corridor instances (Table~\ref{tab:large_results}), but the problem is substantially more challenging.
Compared with the hub-based setting, the denser station layout yields a larger network with more arcs and OD pairs.

For small and medium-sized instances, Benders performs better, as reflected by shorter solution times and smaller optimality gaps.
For example, in ins\_6, the solver reports gaps of 12.63\% and 17.19\%, whereas Benders reduces them to 5.76\% and 9.24\%, respectively.
Due to its greedy construction, the heuristic is less competitive in this regime.
Nevertheless, it can still generate feasible solutions quickly. 
For ins\_6, the heuristic already produces objective values close to those obtained by the solver.

For larger instances, the performance of the exact methods deteriorates markedly.
For ins\_7-9, the solver's optimality gaps rise to as much as 79.12\%, whereas Benders reduces the worst-case gap to 31.88\%. However, even Benders is unable to close the gaps within the time limit. 
In contrast, the heuristic consistently delivers the best objective values in this group of instances. For Ins\_9, the largest instance tested, both the solver and Benders exhibit large optimality gaps under the deterministic and stochastic settings. 
Despite the lack of optimality guarantees, the heuristic delivers substantially higher-quality feasible solutions, improving the solver solution by 65.51\% with the same computation time in the deterministic setting and by 72.95\% with less computation time in the stochastic setting.

Comparing deterministic and stochastic scenarios, the stochastic setting generally yields higher objective values due to additional service reliability requirements.
However, the relative performance of the three methods remains consistent across the two scenarios.

Overall, the results on the urban corridor further confirm that Benders improves exact solution quality and performs well on small and medium-sized instances, while the heuristic becomes increasingly advantageous as the problem scale and complexity grow.

\section{Sensitivity Analysis}
\label{Sensitivity Analysis}

This section examines how passenger demand uncertainty and passenger--freight temporal overlap affect system performance and resource allocation. 
For each scenario, the reported results are based on the best solution obtained from five independent runs. 

\subsection{Effect of Demand Uncertainty}

This subsection examines the impact of passenger demand uncertainty on system performance.
Using the chance-constrained model, we vary the confidence level to quantify how increasingly conservative decisions affect system cost and fleet size.
Out-of-sample performance is evaluated using 50 randomly generated demand scenarios.

Table~\ref{tab:uncertainty} summarizes the objective value (Obj, in kr) and the number of units (\#U) for the deterministic baseline and the stochastic solutions under different confidence levels. 
For each instance, the objective value increases as the confidence level increases.
Operationally, higher confidence levels require more modular units because additional capacity is needed to ensure passenger-service reliability.
This effect is more pronounced for larger instances at high confidence levels.
In particular, for ins\_6--ins\_9, the fleet size increases substantially when the confidence level rises from 0.9 to 0.99.

\begin{table}[ht]
\centering
\renewcommand{\arraystretch}{1.2}
\caption{Impact of demand uncertainty under different confidence levels}
\label{tab:uncertainty}
\resizebox{\textwidth}{!}{
\begin{tabular}{ccc ccc ccc ccc ccc}
\hline

\multirow{4}{*}{Inst}
& \multicolumn{2}{c}{\multirow{2}{*}{Deterministic}}
& \multicolumn{10}{c}{Confidence level $(1-\alpha_p)$} \\
\cline{4-13}

& \multicolumn{2}{c}{}
& \multicolumn{2}{c}{$0.6$}
& \multicolumn{2}{c}{$0.7$}
& \multicolumn{2}{c}{$0.8$}
& \multicolumn{2}{c}{$0.9$}
& \multicolumn{2}{c}{$0.99$} \\
\cline{2-13}

& Obj & \multirow{2}{*}{\#U}
& Obj & \multirow{2}{*}{\#U}
& Obj & \multirow{2}{*}{\#U}
& Obj & \multirow{2}{*}{\#U}
& Obj & \multirow{2}{*}{\#U}
& Obj & \multirow{2}{*}{\#U} \\

& (kr) &
& (kr) &
& (kr) &
& (kr) &
& (kr) &
& (kr) & \\

\hline

ins\_1 & 9.37e3 & 16 & 9.48e3 & 16 & 9.65e3 & 16 & 9.71e3 & 16 & 9.80e3 & 16 & 1.10e4 & 17 \\
ins\_2 & 2.76e4 & 22 & 2.92e4 & 23 & 3.00e4 & 24 & 3.11e4 & 24 & 3.22e4 & 25 & 3.53e4 & 27 \\
ins\_3 & 4.69e4 & 25 & 5.09e4 & 25 & 5.23e4 & 26 & 5.36e4 & 26 & 5.38e4 & 27 & 6.06e4 & 29 \\
ins\_4 & 5.13e3 & 12 & 5.20e3 & 13 & 5.49e3 & 13 & 5.61e3 & 14 & 5.73e3 & 14 & 6.05e3 & 14 \\
ins\_5 & 1.49e4 & 17 & 1.58e4 & 19 & 1.64e4 & 19 & 1.74e4 & 20 & 1.76e4 & 20 & 1.99e4 & 21 \\
ins\_6 & 2.67e4 & 20 & 2.92e4 & 20 & 2.98e4 & 21 & 3.08e4 & 22 & 3.20e4 & 22 & 4.10e4 & 25 \\
ins\_7 & 3.99e4 & 20 & 4.58e4 & 23 & 4.70e4 & 25 & 4.71e4 & 26 & 4.74e4 & 26 & 6.48e4 & 31 \\
ins\_8 & 4.85e4 & 23 & 5.21e4 & 24 & 5.76e4 & 25 & 5.79e4 & 26 & 6.18e4 & 26 & 7.01e4 & 31 \\
ins\_9 & 6.14e4 & 23 & 6.50e4 & 26 & 6.81e4 & 27 & 7.18e4 & 27 & 7.19e4 & 27 & 9.02e4 & 32 \\

\hline
\end{tabular}
}
\end{table}

To further evaluate these solutions, we conduct a Monte Carlo simulation with 50 randomly generated passenger demand scenarios.
Because freight demand and operational schedules are fixed, only passenger demand varies across scenarios.
For each scenario, we solve a relaxed passenger assignment model under the given service plan, in which unmet demand is allowed but heavily penalized (see Appendix~\ref{apxA}).
The evaluation focuses on average passenger waiting time and the number of unserved passengers. 

\begin{figure}[h]
\centering
\includegraphics[width=\linewidth]{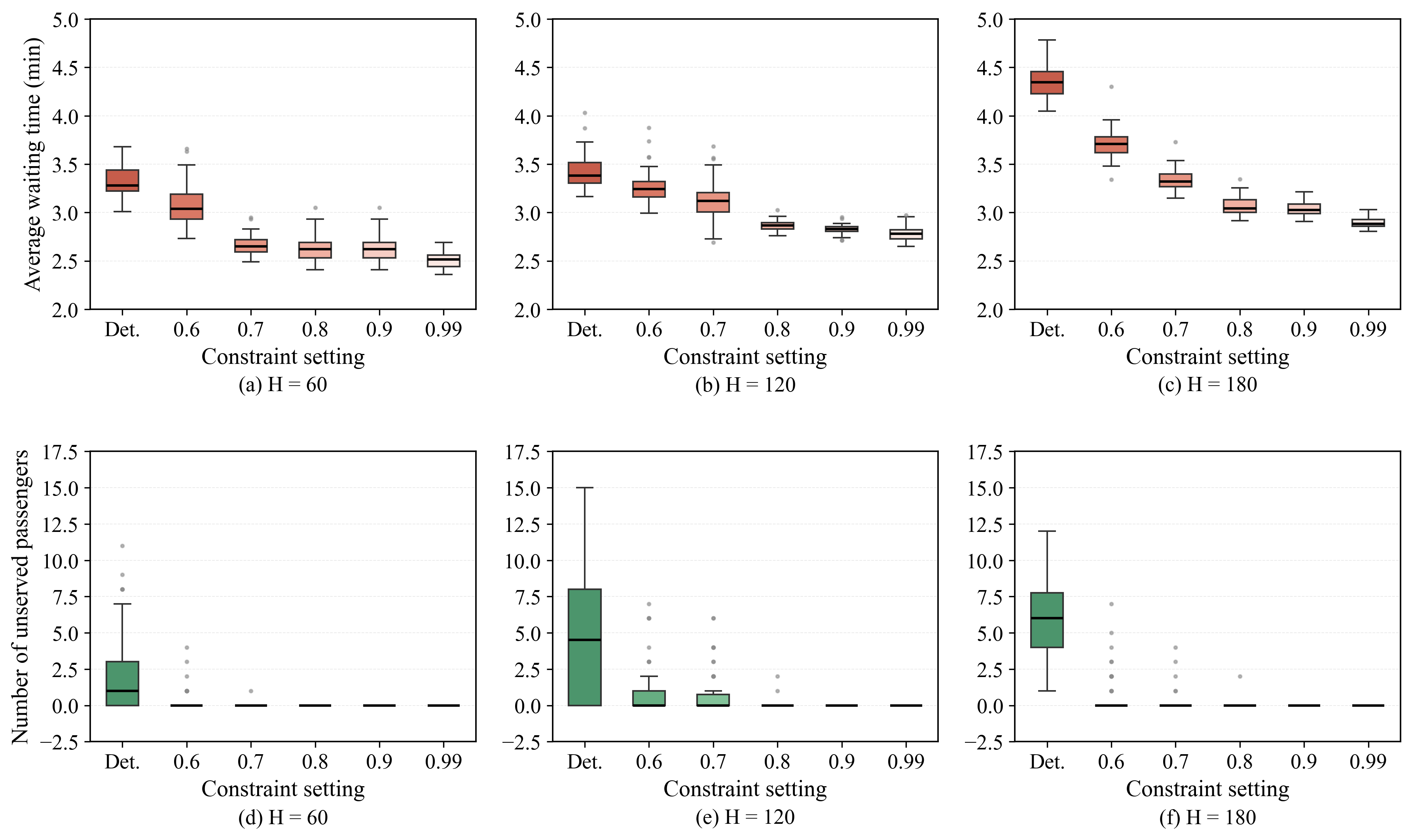}
\caption{Service performance under different confidence levels for hub-based instances.}
\label{fig:box3}
\end{figure}

\begin{figure}[H]
\centering
\includegraphics[width=\linewidth]{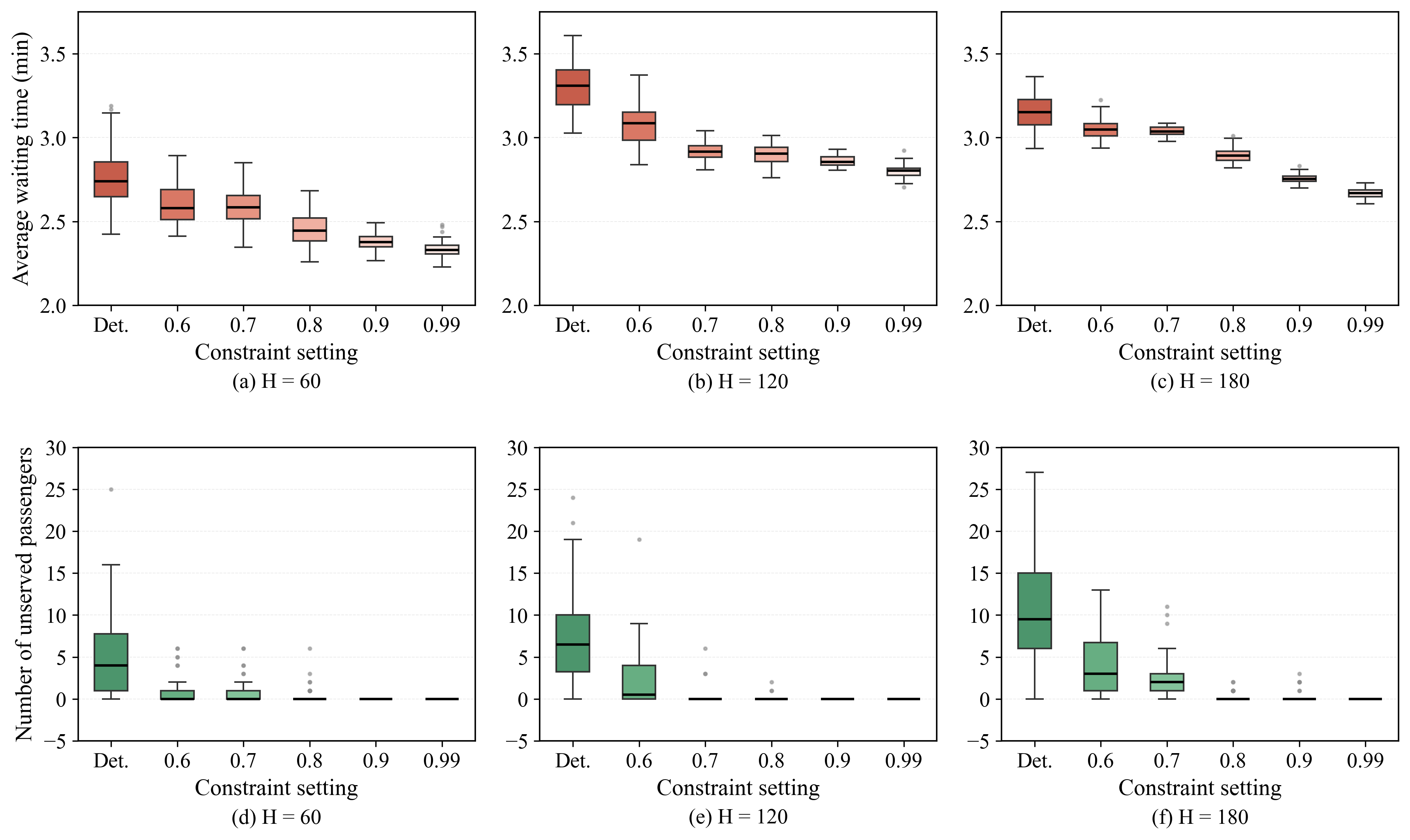}
\caption{Service performance  under different confidence levels for urban corridor instances.}
\label{fig:box8}
\end{figure}

Figures~\ref{fig:box3} and~\ref{fig:box8} present results for representative instances across different confidence levels and planning horizons.
The deterministic solution performs poorly under demand variability, leading to longer passenger waiting times and more unserved passengers.
This highlights the importance of accounting for demand uncertainty at the planning stage.
Introducing chance constraints improves service performance.
Both the average waiting time and the number of unserved passengers decrease as the confidence level increases.
However, the marginal improvement diminishes at higher confidence levels.
In particular, increasing the confidence level from 0.9 to 0.99 yields only small additional reductions in waiting time and little change in service level relative to the 0.8--0.9 cases.

Overall, the results reveal a clear trade-off between robustness and efficiency.
In a passenger-priority system, incorporating demand uncertainty is necessary to ensure service quality.
Nevertheless, very high confidence levels require substantially more capacity while providing limited additional benefits.
In our experiments, confidence levels around 0.8--0.9 offer a balanced degree of conservatism.

\subsection{Effects of Passenger--Freight Temporal Overlap}

This section examines how different degrees of temporal overlap between passenger and freight demand affect system performance.
Passenger-related decisions are obtained at a fixed confidence level of 0.9, while the time windows of freight requests are gradually relaxed from tight intervals to the full planning horizon.
This design allows us to control the overlap between the two demand types and assess its impact on system performance.

To quantify temporal overlap, we use a normalized index based on the relaxation of freight time windows.
All freight requests initially share the same time window $[0,30]$, and the right endpoint is extended in increments of 30~min until it reaches the full planning horizon.
We define the temporal-overlap index as
\begin{equation}
\phi = \frac{H-\bar{b}_f}{H-30},
\end{equation}
where $\bar{b}_f$ denotes the right endpoint of freight time windows.
A larger value of $\phi$ indicates a tighter freight time window and less flexibility to coordinate with passenger demand.

\begin{table}[t]
\centering
\caption{Impact of temporal overlap on system performance}
\label{tab:overlap}
\begin{tabular}{c c cc cc}
\hline
\multirow{2}{*}{Horizon} & \multirow{2}{*}{$\phi$} 
& \multicolumn{2}{c}{Hub-based} 
& \multicolumn{2}{c}{Urban} \\
\cline{3-6}
 &  & Obj (kr) & Freight served (\%) & Obj (kr)  & Freight served (\%) \\

\hline

\multirow{2}{*}{[0, 60]} 
& 0.00   & 9.27e3 & 100.00 & 1.61e4 & 100.00 \\
& 1.00   & 1.83e4 & 50.44  & 1.88e4 & 100.00 \\

\hline

\multirow{4}{*}{[0, 120]} 
& 0.00    & 3.05e4 & 100.00 & 4.39e4 & 100.00 \\
& 0.33 & 3.67e4 & 98.46  & 4.48e4 & 100.00 \\
& 0.67 & 5.77e4 & 67.69  & 5.71e4 & 87.92  \\
& 1.00    & 8.19e4 & 28.97  & 7.69e4 & 61.95  \\

\hline

\multirow{6}{*}{[0, 180]} 
& 0.00   & 4.77e4 & 100.00 & 6.69e4 & 98.95  \\
& 0.20 & 4.92e4 & 100.00 & 6.89e4 & 98.20  \\
& 0.40 & 5.34e4 & 100.00 & 7.30e4 & 97.75  \\
& 0.60 & 7.60e4 & 86.03  & 8.73e4 & 87.09  \\
& 0.80 & 1.14e5 & 51.90  & 1.05e5 & 73.12  \\
& 1.00   & 1.41e5 & 20.95  & 1.41e5 & 46.25  \\

\hline
\end{tabular}
\end{table}

Table~\ref{tab:overlap} summarizes the impact of temporal overlap on the objective value (Obj, in kr) and freight service rate (Freight served, in \%) under different planning horizons.
As $\phi$ increases, freight service rates consistently decrease in both corridor settings, indicating stronger competition when passenger and freight demands are more temporally aligned.
Operationally, this pattern reflects the passenger-first priority of the system.
When temporal overlap is high, the space for unit circulation is limited. Therefore, capacity is primarily allocated to passenger services, which reduces the freight service level.
As freight time windows are relaxed (i.e., lower $\phi$), freight requests gain scheduling flexibility, improving the freight service rate without degrading passenger performance.

Overall, temporal overlap is a key driver of passenger--freight interaction.
More flexible freight scheduling reduces overlap, eases competition, and improves system efficiency.

\section{Comparison with Benchmark Systems}
\label{Comparison with Systems}
This section compares the proposed Modular Integrated System (MIS) with two benchmark systems using representative instances from both corridor settings.
The comparison evaluates the benefits of capacity sharing and dynamic reconfiguration for serving passenger and freight demands.
It also demonstrates how alternative operational settings can be represented through modifications to the network structure and model constraints.

\begin{table}[t]
\centering
\caption{Comparison of MIS, MSS, and FCTS under different scenarios}
\label{tab:system_comparison}
\begin{tabular}{c c c c c c c c c}
\hline
\multirow{2}{*}{Corridor} 
& \multirow{2}{*}{Horizon} 
& \multirow{2}{*}{System} 
& Obj 
& \multirow{2}{*}{\#U} 
& \multirow{2}{*}{\#UP} 
& \multirow{2}{*}{\#UF} 
& APW 
& USF \\
& 
& 
& (kr) 
& 
& 
& 
& (min) 
& (box) \\
\hline

\multirow{9}{*}{Hub-based}
& \multirow{3}{*}{[0, 60]}
& MIS  & 9.80e3 & 16 & -  & -  & 2.53 & 0 \\
&      & MSS  & 1.10e4 & 17  & 10 & 7  & 2.74 & 0 \\
&      & FCTS & 1.87e4 & 26  & 20 & 6  & 2.57 & 28 \\
\cline{2-9}

& \multirow{3}{*}{[0, 120]}
& MIS  & 3.22e4 & 25 & -  & -  & 3.10 & 0 \\
&      & MSS  & 3.39e4 & 27  & 17 & 10 & 2.60 & 4 \\
&      & FCTS & 4.42e4 & 29  & 20 & 9  & 4.77 & 4 \\
\cline{2-9}

& \multirow{3}{*}{[0, 180]}
& MIS  & 5.38e4 & 27 & -  & -  & 2.49 & 20 \\
&      & MSS  & 7.59e4 & 31  & 19  & 12  & 2.69 & 55 \\
&      & FCTS & 7.14e4 & 32  & 20 & 12 & 5.16 & 2 \\
\hline

\multirow{9}{*}{Urban}
& \multirow{3}{*}{[0, 60]}
& MIS  & 1.76e4 & 20 & -  & -  & 2.47 & 0 \\
&      & MSS  & 1.96e4 & 24  & 18 & 6  & 2.44 & 2 \\
&      & FCTS & 2.36e4 & 29  & 20 & 9  & 2.98 & 0 \\
\cline{2-9}

& \multirow{3}{*}{[0, 120]}
& MIS  & 4.74e4 & 26 & -  & -  & 2.67 & 2 \\
&      & MSS  & 4.96e4 & 27  & 20 & 7  & 2.52 & 28 \\
&      & FCTS & 5.02e4 & 26  & 20 & 6  & 2.87 & 0 \\
\cline{2-9}

& \multirow{3}{*}{[0, 180]}
& MIS  & 7.19e4 & 27 & -  & -  & 2.57 & 28 \\
&      & MSS  & 7.69e4 & 30  & 21 & 9  & 2.54 & 46 \\
&      & FCTS & 7.36e4 & 26  & 20 & 6  & 2.81 & 10 \\
\hline
\end{tabular}
\end{table}

The first benchmark is a Modular Segregated System (MSS), which retains the modular structure but does not allow capacity sharing between passenger and freight services.
Each modular unit is pre-assigned to either passenger or freight transport and cannot switch between the two.
The second benchmark is the Fixed-Composition Transit System (FCTS), which represents conventional transit operations.
Vehicles operate with fixed compositions along the entire route, and dynamic reconfiguration is not allowed.
Both benchmark systems are represented within the same modeling framework through targeted modifications. 
To ensure a consistent comparison, all systems are evaluated under the same demand settings and parameter values.

\input{figures/11_different_system_s3}

For MSS, additional storage arcs are introduced to distinguish freight-unit flows, and the unit-flow balance and unit-allocation constraints are adjusted to enforce the separation between passenger and freight units (Appendix~\ref{apxB}).

For FCTS, all reconfiguration-related arcs are disabled and reconfiguration at intermediate stations is prohibited. 
Passenger and freight services are optimized separately under fixed vehicle compositions, with five units for passenger vehicles and three for freight vehicles. 
The fixed compositions are selected to match the maximum passenger vehicle length allowed in the modular system while providing dedicated freight vehicles with substantial carrying capacity.
Alternative fixed-composition settings can also be evaluated within the same benchmark framework.
Following common practice in the literature, freight loading and unloading are assumed to occur during station dwell times, so docking and undocking times are set to zero. 
Additional constraints are imposed to ensure a consistent unit composition along service arcs (Appendix~\ref{apxC}). 

\input{figures/12_different_system_s8}

Table~\ref{tab:system_comparison} reports the results across corridor types and planning horizons.
\#U denotes the total number of modular units used, whereas \#UP and \#UF denote the numbers of passenger and freight units used in MSS and FCTS, respectively.
APW denotes the average passenger waiting time, and USF denotes unserved freight demand.

Across all instances, MIS attains lower objective values than other benchmark systems. The improvement is mainly driven by better fleet utilization and better service outcomes.

From the resource perspective, MIS uses fewer units than the combined passenger and freight fleets required by MSS and FCTS in almost all instances. This result highlights the benefit of capacity pooling, since units can be reassigned between passenger and freight functions over time and the system can therefore meet demand with a smaller fleet.

In terms of service performance, MIS keeps passenger waiting times stable while limiting unserved freight. In MSS, passenger and freight services share the same fleet and compete for capacity. Because passenger demand is prioritized, freight service is more likely to be reduced when conflicts occur. FCTS exhibits the opposite trade-off. By operating passenger and freight services as two independent systems, it avoids direct competition. 
However, passengers at intermediate stations cannot be served by vehicles starting from their station or a closer upstream station and must instead wait for vehicles departing from terminals, which significantly increases their waiting time.
In addition, separate planning and control of passenger and freight services may lead to fragmented decisions and weaker coordination.

Figures~\ref{fig:Comparative analysis of hub schemes} and~\ref{fig:Comparative analysis of urban schemes} visualize the solutions for the hub-based and urban corridors with a 60-minute planning horizon. In each figure, the left panel shows the timetable and vehicle compositions along service arcs. For MIS and MSS, solid lines denote aggregated passenger services, and the number at the origin of each outgoing passenger arc indicates the number of passenger units. Each dashed line denotes a freight service operated by a single freight unit. For FCTS, passenger and freight timetables are overlaid in the same panel although they are operated as two separate systems. The middle and right panels report the passenger and freight flow assignments, showing the passenger and freight loads carried by vehicles across different segments and times.

These figures support the conclusions in Table~\ref{tab:system_comparison}. FCTS is the least flexible system, as fixed compositions and separated operations lead to a rigid timetable and two independent flow structures. MIS and MSS show similar passenger service patterns, consistent with passenger priority in both systems. In both cases, freight is mainly assigned to departures with lower passenger loads to use residual capacity. However, freight operations differ substantially. In MIS, sharing units across passenger and freight functions enables more flexible insertion of freight services with limited additional resources. In MSS, freight units are pre-assigned, which tightens resource availability and constrains freight scheduling. As a result, MSS requires more units and still delivers less efficient freight transport.


Overall, the results demonstrate that capacity sharing and en-route reconfiguration are key operational advantages of the MIS framework, enabling improved efficiency and service quality in integrated passenger–freight systems.

\section{Conclusions and Future Research}
\label{Conclusions and Future Research}

This paper investigates the joint planning and scheduling of modular vehicles for integrated passenger--freight transportation on a bidirectional corridor.
The problem simultaneously determines flexible service routes, operating timetables, and en-route vehicle compositions to serve stochastic passenger demand and deterministic freight demand with different operational requirements.
In addition, we explicitly model unit scheduling to capture unit reuse over the planning horizon and to determine the fleet size required to support the planned services.

To represent these interactions, we construct a space--time--state network and model multiple flow types within a unified network flow framework.
Building on this structure, we develop a stochastic mixed integer programming model.
The passenger assignment component is further reformulated as a path-based model with chance constraints, yielding a linear formulation.

To solve the problem, we propose two solution approaches.
First, we develop an exact Benders decomposition algorithm, in which vehicle planning and demand assignment are handled in the master problem, while the subproblem decomposes into independent unit scheduling problems at individual stations.
To improve computational performance, we incorporate valid inequalities and a warm-start strategy.
Second, to address larger instances, we propose a two-stage decomposition heuristic.
In the first stage, passenger demand is prioritized to construct a feasible passenger service plan.
In the second stage, freight demand and unit scheduling are optimized over the planning horizon conditional on the passenger solution.

We generate two corridor settings inspired by Gothenburg (hub-based and urban) and evaluate the model across various planning horizons and demand scales.
The results show that the exact Benders approach performs well for small- and medium-sized instances, while the heuristic is more efficient for large-scale problems.

Sensitivity analyses examine the effects of passenger-demand uncertainty and the temporal overlap between passenger and freight demands.
The results indicate that accounting for uncertainty is necessary to maintain passenger service quality, whereas more flexible freight scheduling can reduce temporal overlap and improve overall system performance.
Moreover, comparisons with the two benchmark systems show that the proposed system improves operational efficiency by enabling capacity sharing between passenger and freight services and allowing dynamic reconfiguration of vehicle composition along the route.


To focus on the planning and scheduling value of modular vehicle reconfiguration in a pre-planned setting, the model abstracts from detailed operational variability. 
Travel times, docking and undocking times, unit capacities, and station handling capacities are treated as deterministic parameters, while passenger-demand uncertainty is represented through marginal demand-group chance constraints. 
Future research could incorporate richer operational uncertainty and extend the framework to more complex network settings with rolling-horizon or real-time control strategies.

\clearpage
\appendix
\section{Notation Summary}
\label{apx:notation}

This appendix summarizes the key notation used in the mathematical model.

\begin{longtable}{>{\raggedright\arraybackslash}p{0.18\textwidth} >{\raggedright\arraybackslash}p{0.75\textwidth}}
\caption{Sets and indices used in the model.}
\label{tab:notation_sets}\\
\toprule
\textbf{Notation} & \textbf{Description} \\
\midrule
\endfirsthead
\caption[]{Sets and indices used in the model (continued).}\\
\toprule
\textbf{Notation} & \textbf{Description} \\
\midrule
\endhead
\midrule
\multicolumn{2}{r}{Continued on next page}\\
\endfoot
\bottomrule
\endlastfoot

$S$ & Set of stations in the corridor, indexed by $s$ and $s'$. \\
$P$ & Set of passenger demand groups, indexed by $p$. \\
$F$ & Set of freight demand groups, indexed by $f$. \\
$T$ & Set of discrete time points, indexed by $t$ and $t'$. \\
$K$ & Set of operational states, where $K=\{k_1,k_2,k_3\}$ represents upbound, storage, and downbound states. \\
$U$ & Set of unit indices, where $U=\{u_0\}\cup U_f$ includes the virtual passenger unit $u_0$ and freight-unit indices in $U_f$. \\
$U_f$ & Set of freight-unit indices, indexed by $u$. \\
$R_p$ & Set of feasible passenger paths for passenger group $p$. \\
$\mathcal{N}$ & Set of nodes in the space--time--state network. \\
$\mathcal{N}_1,\mathcal{N}_2,\mathcal{N}_3$ & Node subsets for upbound, storage, and downbound states. \\
$\mathcal{E}$ & Set of arcs in the space--time--state network, indexed by $e$. \\
$\mathcal{E}_*,\mathcal{E}_\circ,\mathcal{E}_\triangle$ & Sets of service, storage, and reconfiguration arcs, respectively. \\
$\mathcal{E}_\circ(s)$ & Set of storage arcs at station $s$. \\
$\mathcal{E}_u$ & Set of service and reconfiguration arcs associated with unit index $u$. \\
$\mathcal{E}^{+}(\cdot),\mathcal{E}^{-}(\cdot)$ & Sets of arcs leaving and entering a node, respectively. \\
$\mathcal{E}'^{+}(\cdot),\mathcal{E}'^{-}(\cdot)$ & Outgoing and incoming arcs of a node restricted to $\mathcal{E}'\subseteq\mathcal{E}$. \\
$\mathcal{E}_p,\mathcal{E}_f$ & Sets of admissible arcs for passenger group $p$ and freight group $f$. \\

\end{longtable}

\begin{longtable}{>{\raggedright\arraybackslash}p{0.18\textwidth} >{\raggedright\arraybackslash}p{0.75\textwidth}}
\caption{Parameters used in the model.}
\label{tab:notation_parameters}\\
\toprule
\textbf{Notation} & \textbf{Description} \\
\midrule
\endfirsthead
\caption[]{Parameters used in the model (continued).}\\
\toprule
\textbf{Notation} & \textbf{Description} \\
\midrule
\endhead
\midrule
\multicolumn{2}{r}{Continued on next page}\\
\endfoot
\bottomrule
\endlastfoot

$\underline{t},\overline{t}$ & Start and end times of the planning horizon. \\
\(H\) & Duration of the planning horizon, \(H=\overline{t}-\underline{t}\). \\
$\overline{n}_s$ & Storage capacity of station $s$. \\
$l$ & Maximum number of units in a modular vehicle. \\
$o_p,\delta_p$ & Origin and destination stations of passenger group $p$. \\
$o'_p,\delta'_p$ & Auxiliary source and sink nodes for passenger group $p$. \\
$[a_p,b_p]$ & Arrival time window of passenger group $p$. \\
$k_p$ & Travel direction of passenger group $p$. \\
$\tilde{q}_p$ & Random passenger demand of passenger group $p$. \\
$\mu_p,\sigma_p$ & Mean and standard deviation of the passenger demand distribution of group $p$. \\
$\alpha_p$ & Chance-constraint violation probability for passenger group $p$. \\
$\Phi^{-1}(\cdot)$ & Inverse cumulative distribution function of the standard normal distribution. \\
$o_f,\delta_f$ & Origin and destination stations of freight group $f$. \\
$[a_f,b_f]$ & Delivery time window of freight group $f$. \\
$k_f$ & Travel direction of freight group $f$. \\
$q_f$ & Demand quantity of freight group $f$. \\
$\tau_e$ & Travel, waiting, or processing time of arc $e$. \\
$\tau_r$ & Waiting time associated with passenger path $r$. \\
$c_p,c_f$ & Capacity of one passenger unit and one freight unit, respectively. \\
$\underline{h}$ & Minimum headway between consecutive departures at the same station and direction. \\
$w_1,w_2,w_3$ & Cost coefficients for unit deployment, passenger waiting time, and unmet freight demand, respectively. \\

\end{longtable}

\begin{longtable}{>{\raggedright\arraybackslash}p{0.18\textwidth} >{\raggedright\arraybackslash}p{0.75\textwidth}}
\caption{Decision variables and flow variables used in the model.}
\label{tab:notation_variables}\\
\toprule
\textbf{Notation} & \textbf{Description} \\
\midrule
\endfirsthead
\caption[]{Decision variables and flow variables used in the model (continued).}\\
\toprule
\textbf{Notation} & \textbf{Description} \\
\midrule
\endhead
\midrule
\multicolumn{2}{r}{Continued on next page}\\
\endfoot
\bottomrule
\endlastfoot

$x_{(s,t,k)}$ & Binary variable equal to 1 if a vehicle departs from station $s$ at time $t$ in direction $k$, and 0 otherwise.  \\
$n_s$ & Number of modular units initially deployed at station $s$ at the start of the planning horizon $\underline{t}$. \\
$y_e$ & Number of units assigned to arc $e$. \\
$z_e^p,z_e^f$ & Passenger flow of group $p$ and freight flow of group $f$ assigned to arc $e$, respectively. \\
$z_r^p$ & Passenger flow of group $p$ assigned to feasible path $r$. \\
$\widehat{q}_p$ & Deterministic passenger demand of group $p$ used under the chance constraint. \\

\end{longtable}

\section{Passenger Assignment Model}
\label{apxA}

For each demand scenario, the passenger demand of each group $p \in P$ is randomly generated according to the assumed distribution and treated as given, denoted by $q_p$.
The passenger service plan obtained from the deterministic or chance-constrained models is denoted by $\hat{y}_e$, $e \in \mathcal{E}_{u_0}$, and serves as input to the following assignment model.

Because the capacity provided by a given service plan may be insufficient to cover realized passenger demand in some scenarios, the objective function jointly accounts for passenger waiting costs and penalties for unmet demand.
A large penalty $w_4$ is imposed for each unserved passenger.
\begin{equation}
    \min \quad 
    w_2 \sum_{p \in P} \sum_{r \in R_p} \tau_r z_r^p
    + w_4 \sum_{p \in P} \left(q_p - \sum_{r \in R_p} z_r^p \right).
\end{equation}

The model is subject to the following constraints:
\begin{align}
    &\sum_{r \in R_p} z_r^p \le q_p, && \forall p \in P, \\
    &\sum_{p \in P} \sum_{r \in R_p : e \in r} z_r^p \le c_p \hat{y}_e, && \forall e \in \mathcal{E}_{u_0},\\
    &z_r^p \ge 0, && \forall p \in P, r \in R_p.  \nonumber
\end{align}

The slack demand satisfaction constraints ensure that the assigned flow does not exceed the realized demand. 
In the experiments, the penalty parameter is set to a sufficiently large value ($w_4 = 10{,}000$) to strongly discourage unmet passenger demand.

\section{Modular Segregated System Model}
\label{apxB}

This appendix presents the formulation of the Modular Segregated System (MSS), which serves as a benchmark in the numerical experiments.
Compared with the Modular Integrated System (MIS), MSS retains the modular vehicle structure but prohibits capacity sharing between passenger and freight services.
Passenger and freight units are therefore strictly separated throughout the planning horizon.

In the space--time--state network, all original storage arcs are treated as passenger-unit arcs associated with the passenger-unit type $u_0$.
For each storage arc, we introduce parallel arcs for all freight unit types $u \in U_f$.
As a result, passenger flows are restricted to passenger-dedicated arcs, whereas freight flows can only use freight-dedicated arcs.
To distinguish the initial deployment of the two unit types, we introduce decision variables $n_s^p$ and $n_s^f$, representing the numbers of passenger and freight units initially assigned to station $s$, respectively.

The objective function minimizes the total operating cost:
\begin{equation}
    \min \quad 
    w_1 \sum_{s \in S} (n_s^p+n_s^f)
    + w_2 \sum_{p \in P} \sum_{r \in R_p} \tau_r z_r^p
    + w_3 \sum_{f \in F} \left(q_f - \sum_{e \in \mathcal{E}^{+}(o_f,a_f,k_2)} z_e^f \right).
\end{equation}

The model is subject to the following constraints, which define unit allocation, flow conservation, and capacity limits:
\begin{align}
    &\sum_{u \in U_f}\sum_{e \in \mathcal{E}_u^{+}(s,\underline{t},k_2)} y_e = n_s^f, && \forall s \in S,\\
    &\sum_{e \in \mathcal{E}_{u_0}^{+}(s,\underline{t},k_2)} y_e = n_s^p, && \forall s \in S\\
    &\sum_{u \in U_f}\sum_{e \in \mathcal{E}_u^{-}(s,\overline{t},k_2)} y_e = n_s^f, && \forall s \in S, \\
    &\sum_{e \in \mathcal{E}_{u_0}^{-}(s,\overline{t},k_2)} y_e = n_s^p, && \forall s \in S\\
    &n_s^p + n_s^f \le \overline{n}_s, && \forall s \in S,\\
    &\sum_{e \in \mathcal{E}_{u_0}^{+}(s,t,k)} y_e = \sum_{e \in \mathcal{E}_{u_0}^{-}(s,t,k)} y_e, && \forall (s,t,k) \in \mathcal{N}_2, \; t \neq \underline{t}, \; t \neq \overline{t},  \\
    &\sum_{u \in U_f } \sum_{e \in \mathcal{E}_{u}^{+}(s,t,k)} y_e = \sum_{u \in U_f }  \sum_{e \in \mathcal{E}_{u}^{-}(s,t,k)} y_e, && \forall (s,t,k) \in \mathcal{N}_2, \; t \neq \underline{t}, \; t \neq \overline{t},  \\
    & \sum_{e \in \mathcal{E}_\circ^+(s,t,k)} y_e \le \overline{n}_s, && \forall (s,t,k) \in \mathcal{N}_2, \\ 
    &n_s^p,n_s^f \in \mathbb{Z}_{\geq 0}, && \forall s \in S, \\
    & (\ref{c1}), (\ref{c6})-(\ref{c8}), (\ref{c10}), (\ref{c12}), (\ref{c18})-(\ref{c23}), (\ref{c26})-(\ref{c28}), (\ref{c29})-(\ref{c30}). \nonumber
\end{align}

These additional constraints enforce the strict separation between passenger and freight units.
The initial and terminal allocation constraints distinguish the two unit types and ensure feasibility with respect to station capacity.
Flow balance is imposed separately for passenger and freight units at storage nodes, eliminating any possibility of capacity sharing.
The remaining constraints are identical to those in MIS and govern vehicle operations, demand satisfaction, and integrality requirements.

\section{Fixed-Composition Transit System Model}
\label{apxC}

This appendix presents the formulation of the Fixed-Composition Transit System (FCTS), which serves as a benchmark in the numerical experiments.
Starting from the space--time--state network, the conventional fixed-capacity transit setting is obtained by modifying the original model.

In FCTS, passenger and freight services are operated as two separate systems and solved independently.
For the passenger system, all freight-related arcs are disabled.
In addition, all reconfiguration arcs at non-terminal nodes are disabled, and the pool size at each non-terminal station is set to zero.
Consequently, no unit storage or reconfiguration is allowed at intermediate stations.
Vehicles can only depart from and return to terminal stations, and their composition remains fixed along the route.
Docking and undocking times at terminals are set to zero because each vehicle is treated as an indivisible unit and no reconfiguration is modeled.

The objective function for the passenger system is
\begin{equation}
    \min \quad
    w_1 \sum_{s \in S} n_s
    + w_2 \sum_{p \in P} \sum_{r \in R_p} \tau_r z_r^p.
\end{equation}

The model is subject to the following additional constraints:
\begin{align}
    &y_e = 0, && \forall e \in \mathcal{E}_u,\; u \in U_f, \\
    &\sum_{e \in \mathcal{E}_\triangle^+(s,t,k)} y_e + \sum_{e \in \mathcal{E}_\triangle^-(s,t,k)} y_e  = 0, && \forall (s,t,k) \in \mathcal{N}_2, \ s \in S^{\mathrm{int}},\label{apx:con1}\\
    &\sum_{e \in \mathcal{E}_*^{+}(s,t,k)} y_e = l\, x_{(s,t,k)}, && \forall (s,t,k) \in \mathcal{N}_1 \cup \mathcal{N}_3, \label{apx:con2}\\
    &(\ref{c1})-(\ref{c6}), (\ref{c9})-(\ref{c12}), (\ref{c26})-(\ref{c28}), (\ref{c29})-(\ref{c30}). \nonumber
\end{align}

The first constraint removes all freight-unit arcs.
The second constraint prohibits docking and undocking at intermediate stations, denoted by \(S^{\mathrm{int}}\). 
Since \(y_e \ge 0\), forcing the total incoming and outgoing reconfiguration flow at each intermediate storage node to zero disables all associated reconfiguration arcs.
The third constraint fixes the number of units on each service arc, ensuring a constant vehicle composition.
In the experiments, passenger vehicles operate with fixed five-unit consists (i.e., $l=5$).
The remaining constraints are inherited from the original model.

The freight system is formulated analogously and solved independently.
All passenger-related arcs are disabled, and a simplified flow structure is adopted for freight transportation.
Specifically, freight flows are not distinguished by individual units. Instead, a single aggregated arc is constructed for each service and reconfiguration activity, with nonnegative flow representing the total freight capacity.

The objective function for the freight system is
\begin{equation}
    \min \quad
    w_1 \sum_{s \in S} n_s
    + w_3 \sum_{f \in F} \left(q_f - \sum_{e \in \mathcal{E}^{+}(o_f,a_f,k_2)} z_e^f \right).
\end{equation}

The corresponding constraints are:
\begin{align}
    &y_e = 0, && \forall e \in \mathcal{E}_{u_0}, \\
    &(\ref{c1})-(\ref{c6}), (\ref{c9})-(\ref{c12}), (\ref{c18})-(\ref{c23}), (\ref{apx:con1})-(\ref{apx:con2}). \nonumber
\end{align}

In this case, freight vehicles operate with fixed three-unit consists (i.e., $l=3$).

The passenger and freight systems are solved independently, and the results are combined to obtain the total unit requirement and the corresponding service performance.

\clearpage
\bibliography{references.bib} 

\end{document}